\documentclass[12pt,letterpaper, final]{amsart}
\usepackage{amssymb, amsmath, amsthm}
\usepackage{mathrsfs}  
\usepackage{graphicx}
\usepackage[colorlinks=true, citecolor=blue, linkcolor=red, urlcolor=blue]{hyperref}
\usepackage[text={6in,8in},centering]{geometry}
 \usepackage{xcolor}

\usepackage{hyperref}
\usepackage{mathrsfs}  

\usepackage{bm}

\newcommand{\Id}{\operatorname{Id}}

\usepackage{ifdraft}
\ifoptionfinal{
\usepackage[disable]{todonotes}
}{
\usepackage[norefs, nocites]{refcheck}

\usepackage[notref, notcite]{showkeys}
\usepackage[bordercolor=white, color=white]{todonotes}
}

\makeatletter\providecommand\@dotsep{5}\def\listtodoname{List of Todos}\def\listoftodos{\hypersetup{linkcolor=black}\@starttoc{tdo}\listtodoname\hypersetup{linkcolor=blue}}\makeatother

\usepackage{hyperref}
\usepackage{etoolbox}

\newtheorem{theorem}{Theorem}[section]
\newtheorem{lemma}[theorem]{Lemma}

\newtheorem{proposition}[theorem]{Proposition}
\theoremstyle{definition}

\theoremstyle{remark}
\newtheorem{remark}[theorem]{Remark}

\numberwithin{equation}{section}

\def\C{\mathbb C}
\def\R{\mathbb R}

\renewcommand{\leq}{\leqslant}
\renewcommand{\geq}{\geqslant}

\newcommand*\xbar[1]{%
   \hbox{%
     \vbox{%
       \hrule height 0.5pt 
       \kern0.5ex
       \hbox{%
         \ensuremath{#1}%
       }%
     }%
   }%
} 

\title[Fractional Calder\'on problem]{Fractional Calder\'on problem with a potential}
\author[Feizmohammadi]{Ali Feizmohammadi}
\address{A. Feizmohammadi, Department of Mathematics, University of Toronto, Road Deerfield Hall, 3008K Mississauga, ON L5L 1C6}
\email{ali.feizmohammadi@utoronto.ca}

\author[Krupchyk]{Katya Krupchyk}
\address
        {K. Krupchyk, Department of Mathematics\\
University of California, Irvine\\
CA 92697-3875, USA }

\email{katya.krupchyk@uci.edu}

\author[Uhlmann]{Gunther Uhlmann}
\address
       {G. Uhlmann, Department of Mathematics\\
       University of Washington\\
       Seattle, WA  98195-4350\\
       USA}
\email{gunther@math.washington.edu}

\begin{document}


\begin{abstract}
We prove that the knowledge of local Cauchy data for a fractional Schr\"odinger
equation on a smooth closed and connected Riemannian
manifold of dimension at least two determines the manifold, its
metric, and a smooth complex-valued potential, up to an isometry
fixing the observation set. The observation set may be any
nonempty open subset and no further geometric assumptions are required. The proof uses functional calculus identities to recover the
local action of the resolvent of the Laplace--Beltrami operator.
\end{abstract}

\maketitle

\section{Introduction}

For an equation involving the fractional Laplacian, local
measurements retain information about the global geometry on
which the operator is defined. When the equation also contains
an unknown potential, the inverse problem requires the
simultaneous determination of a nonlocal principal part and
a local perturbation. In this paper, we solve this problem on
smooth closed connected Riemannian manifolds in dimensions two
and higher, without further geometric assumptions. The
measurements may be confined to any nonempty open set, and
the potential need only be known on that set.

The problem is a nonlocal analogue of the geometric anisotropic Calder\'on
problem, in which one seeks to determine a Riemannian metric
from the boundary Cauchy data of harmonic functions; see
\cite{Calderon_1980,Lee_Uhlmann_1989,Lassas_Uhlmann_2001}.
The latter problem remains open for general smooth metrics in
dimensions three and higher. Its known resolutions include
the real-analytic setting and, for the determination of a
conformal factor, suitable transversally anisotropic geometries
\cite{Lee_Uhlmann_1989,Lassas_Uhlmann_2001,DKSU_2009,DKLS_2016}.
For fractional equations, the connection between local
measurements and global spectral information leads to a
different approach to geometric recovery, as we discuss below.

To state our result, let us first introduce the Cauchy data. Let $(M,g)$ be a smooth closed connected Riemannian manifold of dimension $n\geq2$. The Laplace--Beltrami operator $-\Delta_g$ is nonnegative and self-adjoint on $L^2(M,dV_g)$ with domain $H^2(M)$. For $\alpha\in(0,1)$, we define, by the spectral theorem, the fractional Laplacian $(-\Delta_g)^\alpha$ as an unbounded self-adjoint operator on $L^2(M,dV_g)$ with domain $H^{2\alpha}(M)$.

Let $V\in C^\infty(M;\mathbb C)$ and consider the fractional Schr\"odinger operator $(-\Delta_g)^\alpha+V$.
Let $O\subset M$ be a nonempty open set. Following \cite{FKU}, we associate with this operator the Cauchy data set on $O$,
\begin{equation}
\label{eq_data}
\mathcal{C}^O_{M,g,V}
=
\left\{
\left(
u,
(-\Delta_g)^\alpha u\right)\big|_{O}\,:\,
\,u\in C^\infty(M),\,
(-\Delta_g)^\alpha u+Vu=0
\text{ on }M\setminus\overline O
\right\}.
\end{equation}

Our main result is the following.

\begin{theorem}
\label{thm_main}
Let $\alpha\in(0,1)$. For $j=1,2$, let $(M_j,g_j)$ be a smooth closed connected Riemannian manifold of dimension $n\geq2$, and let
$V_j\in C^\infty(M_j;\mathbb C)$. Suppose that $O\subset M_1\cap M_2$ is a common nonempty open set, and assume that
\begin{equation}
\label{eq:knownpotential}
V_1|_O=V_2|_O.
\end{equation}
If
\begin{equation}
\label{eq:dataequal}
\mathcal{C}^O_{M_1,g_1,V_1}
=
\mathcal{C}^O_{M_2,g_2,V_2},
\end{equation}
then there exists a smooth diffeomorphism $\Phi:M_1\to M_2$ such that
\begin{equation}
\label{eq:conclusion}
\Phi|_O=\operatorname{Id},
\qquad
g_1=\Phi^*g_2,
\qquad
V_1=V_2\circ\Phi.
\end{equation}
Conversely, \eqref{eq:conclusion} implies \eqref{eq:dataequal}.
\end{theorem}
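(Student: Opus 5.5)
The plan is to reduce the statement to the potential-free case, that is, to the result of \cite{FKU} for the fractional Laplacian with $V=0$, by recovering from the Cauchy data the local action on $O$ of the resolvent $(-\Delta_g-\lambda)^{-1}$, or equivalently of the heat semigroup. Once that local object is shown to agree for $j=1,2$, we invoke the geometric reconstruction of \cite{FKU}, which builds $\Phi$ from local heat-kernel (or spectral) data on $O$ via a Gel'fand/Kurylev-type argument. The converse direction is routine. If $\Phi$ is an isometry fixing $O$, then $\Phi^*$ intertwines $(-\Delta_{g_2})^\alpha$ and $(-\Delta_{g_1})^\alpha$ by naturality of the spectral calculus, and $V_1=V_2\circ\Phi$ preserves the equation off $\overline O$. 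Hence the data sets coincide.

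\textbf{Step 1: reduction to the source-to-solution map.} For $f\in C_c^\infty(O)$, we look for solutions of $(-\Delta_g)^\alpha u+Vu=f$ on $M$. Since $V$ is complex-valued, this operator is Fredholm of index zero on $H^{2\alpha}$ but may have a kernel. We therefore use the Cauchy data to determine the map $f\mapsto u|_O$ modulo the finite-dimensional obstruction; the kernel elements themselves are encoded in $\mathcal C^O$. The equality \eqref{eq:dataequal}, together with $V_1=V_2$ on $O$, will show that the full equation $(-\Delta_g)^\alpha u+Vu$ agrees on $O$ for paired elements. It also gives $(u_1-u_2)|_O=0$ and $((-\Delta_{g_1})^\alpha u_1-(-\Delta_{g_2})^\alpha u_2)|_O=0$.

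\textbf{Step 2: eliminating $V$ via functional calculus.} This is the step I expect to be the main obstacle. The idea is to exploit that $V$ acts locally. On $M\setminus\overline O$ we have $(-\Delta_g)^\alpha u=-Vu$, which is a local expression. We want an identity expressing $(-\Delta_g-\lambda)^{-1}$ restricted to $O$ in terms of the data. I would first try the Balakrishnan-type formula
\[
(-\Delta_g)^{\alpha}w=\frac{\sin\pi\alpha}{\pi}\int_0^\infty \mu^{\alpha-1}(-\Delta_g)(-\Delta_g+\mu)^{-1}w\,d\mu .
\]
We would apply it to products built from solutions, and use a unique continuation property of $(-\Delta_g)^\alpha$ (if $w|_O=0$ and $(-\Delta_g)^\alpha w|_O=0$ then $w=0$) to compare. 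Concretely, from the density of $\{u|_O\}$ pairs, we aim to show that the operators
\[
L_j=\chi\bigl((-\Delta_{g_j})^{\alpha}\bigr)\quad\text{restricted to } C_c^\infty(O)\to \mathcal D'(O)
\]
agree for suitable functions $\chi$. The plan is to show first that $V$ is recovered along with the local resolvent. The route is an Alessandrini-type integral identity tested against approximate eigenfunctions, where the unknown part $\int_{M\setminus O}(V_1-V_2)\ldots$ is killed by analyticity in the spectral parameter. Analytic continuation in $\lambda$ of $\langle (-\Delta_g-\lambda)^{-1}f,h\rangle$ for $f,h\in C_c^\infty(O)$ then pins down the local resolvent.

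\textbf{Step 3: geometry and potential.} With the local resolvent (hence the local heat kernel on $O\times O$) equal, \cite{FKU} gives the diffeomorphism $\Phi$ with $\Phi|_O=\Id$ and $g_1=\Phi^*g_2$. Pulling back, we may assume $M_1=M_2$ and $g_1=g_2$. Then the data equality says that for each solution $u_1$ there is a solution $u_2$ with the same Cauchy data on $O$. Unique continuation for $(-\Delta_g)^\alpha$ forces $u_1=u_2$ on $M$. Subtracting the two equations gives $(V_1-V_2)u=0$ off $\overline O$. Finally, a Runge-type density of solutions, namely that solutions are dense in $L^2(M\setminus\overline O)$ up to finite dimensions, which again follows by duality and unique continuation, yields $V_1=V_2$.
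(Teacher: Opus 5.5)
\textbf{Verdict: genuine gap.} Your overall plan matches the shape of the paper's argument: handle the kernel, recover the local resolvent or heat kernel of $-\Delta_{g_j}$ on $O$, apply a heat-kernel-to-isometry theorem, then recover $V$ by unique continuation. Your Step~3 is essentially fine. In fact, a single solution with nonzero source already suffices there, since unique continuation prevents it from vanishing on any open subset of $M\setminus\overline O$, so no Runge density is needed. The problem is Step~2, which carries the whole weight of the theorem and has no working mechanism.

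\textbf{Where Step~2 fails.} An Alessandrini-type identity with an unknown term $\int_{M\setminus O}(V_1-V_2)\cdots$ cannot be formed here. The solutions live on different manifolds $M_1$ and $M_2$ whose metrics off $O$ are unknown. So $(-\Delta_{g_1})^\alpha$ and $(-\Delta_{g_2})^\alpha$ do not cancel, and there is no common domain of integration, let alone a holomorphic family for analyticity in $\lambda$ to act on. For the same reason, recovering $V$ before the geometry is not meaningful. The Balakrishnan formula does not help by itself either. The data give $(-\Delta_{g_j})^\alpha$ on $O$ only for the special functions that solve the equation off $\overline O$. Since $(-\Delta_g)^\alpha+V$ does not commute with $-\Delta_g$, nothing you wrote turns this into information about $(-\Delta_{g_j}+z)^{-1}$ applied to test functions. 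You also never recover $g|_O$, which you need before comparing local operators, volume forms, or pairings on $O$.

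\textbf{Where Step~1 falls short.} The phrase ``modulo the finite-dimensional obstruction'' does not give a well-defined source-to-solution map. The two kernels could a priori differ.

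\textbf{What the paper does instead.}
\begin{itemize}
\item It adds a common $W\in C^\infty_0(O;\mathbb R)$, which leaves the data unchanged. $W$ is chosen by minimizing $\dim\ker$ over such perturbations, using the bilinear symmetry of $P$ and unique continuation, so that $P_1$ becomes invertible. Equality of the data then forces $P_2$ to be injective, hence invertible.
\item The local maps $f\mapsto(P_j^{-1}f)|_O$ then coincide. Their principal symbols $|\xi|^{-2\alpha}_{g_j}$ give $g_1=g_2$ on $O$.
\item The key lemma is a heat comparison principle. If $a_j\in C^\infty(M_j)$ have matching pairs $\bigl(a_j,(-\Delta_{g_j})^\alpha a_j\bigr)|_O$, then $e^{t\Delta_{g_1}}a_1=e^{t\Delta_{g_2}}a_2$ on $O$ for all $t>0$. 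Finite speed of propagation and the Gaussian cosine-propagator representation of the heat semigroup give $O(e^{-c/t})$ bounds, locally uniformly on $O$, for the difference $h$ and all its time derivatives. Integration by parts in $\int_0^\infty\partial_t^m h\,t^{-1-\alpha}\,dt=0$ and a moment argument then finish the proof.
\item Applying this lemma to $P_j^{-1}f$ and integrating against $e^{-zt}$ gives $(-\Delta_{g_j}+z)^{-1}P_j^{-1}$ on $O$.
\item Bilinear transposition reverses the order to $P_j^{-1}(-\Delta_{g_j}+z)^{-1}$.
\item A second application of the lemma, to $v_j=(-\Delta_{g_j}+z)w_j$ with $w_j=P_j^{-1}(-\Delta_{g_j}+z)^{-1}f$, together with an ODE in $t$, removes $P_j^{-1}$. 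This yields the local resolvent, and hence the heat kernels on $O\times O$.
\end{itemize}
Without some mechanism of this kind, your proposal never reaches the point where Step~3 applies.
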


The metric on $O$ is recovered as part of the
argument and is not assumed known in advance. The observation
set need not be connected or have a regular boundary.

The corresponding simultaneous recovery problem for the fractional
Schr\"odinger operator with external data in Euclidean space is addressed
in the forthcoming paper~\cite{FKU_Euclidean}.

\begin{remark}
The inverse problem of primary interest is the case
$M_j\setminus\overline O\neq\emptyset$, $j=1,2$, when
the equation in \eqref{eq_data} is imposed on a nonempty
inaccessible open set. We do not require this assumption
in Theorem~\ref{thm_main}, since the proof also covers
the case where $O$ is dense in either manifold.
Indeed, if $M_j\setminus\overline O=\emptyset$, the
equation in \eqref{eq_data} is vacuous, and the corresponding
Cauchy data set is independent of the potential.
The recovery of the manifold and metric still follows
from the argument below, which yields an isometry
$\Phi:M_1\to M_2$ fixing $O$ pointwise.
After this identification, $O$ is dense in both manifolds,
and \eqref{eq:knownpotential} gives
$V_1=V_2\circ\Phi$ everywhere by continuity.
\end{remark}

\begin{remark}
\label{rem_gauge}
The values of the potential $V$ on $O$ cannot be recovered from the Cauchy data set \eqref{eq_data}. Indeed, for every $W\in C^\infty_0(O;\mathbb C)$,
\begin{equation}
\label{eq:invisible}
\mathcal{C}^O_{M,g,V+W}
=
\mathcal{C}^O_{M,g,V}.
\end{equation}
Thus the assumption \eqref{eq:knownpotential} is necessary for a global potential-recovery result from the Cauchy data on $O$.
\end{remark}

\begin{remark}
\label{rem:fulloperator}
One may instead measure the fractional Schr\"odinger
operator itself on $O$. Writing
$P_{g,V}=(-\Delta_g)^\alpha+V$, consider the Cauchy data set
\begin{equation}
\label{eq:full_operator_data}
\widetilde{\mathcal C}^{O}_{M,g,V}
=
\left\{
\bigl(u|_O,(P_{g,V}u)|_O\bigr):
u\in C^\infty(M),\
P_{g,V}u=0 \text{ on }M\setminus\overline O
\right\}.
\end{equation}
For the manifolds, potentials, and observation set of
Theorem~\ref{thm_main}, equality of the data
\eqref{eq:full_operator_data} implies the conclusion of
that theorem without 
the assumption \eqref{eq:knownpotential}.
The converse also holds. Thus these data determine the
entire potential, including its values on $O$, as well
as the manifold and its metric.

Indeed, adding a common $W\in C^\infty_0(O;\mathbb C)$
to the potentials transforms each measured pair $(a,b)$
into $(a,b+Wa)$, and therefore preserves equality of the
two data sets.  Lemma~\ref{lem:removeone} and the arguments
of Lemmas~\ref{lem:commoninverse} and~\ref{lem:localinverse}, applied to these modified pairs,
give a common perturbation for which
$P_j=(-\Delta_{g_j})^\alpha+V_j+W$ are invertible and
their local inverse maps agree. The leading symbols
recover $g_1|_O=g_2|_O$, as in
Lemma~\ref{prop:localmetric}. Symbolic inversion also determines the full symbols of
$P_j$ on $O$ modulo smoothing terms.
Each homogeneous coefficient in the classical symbol of
$(-\Delta_g)^\alpha$ is determined by the local coefficients
of the metric and finitely many of their derivatives;
see \cite{Seeley_1967} and
\cite[Chapter~II, Section~11]{Shubin_2021}.
Since $g_1=g_2$ on $O$, the full symbols of the two
fractional Laplacians agree there modulo smoothing terms.
It follows that multiplication by $V_1-V_2$ is smoothing
on $O$, and hence $V_1|_O=V_2|_O$.
Subtracting the now common term $V_j u|_O$ from the
second component of the original measured pairs in
\eqref{eq:full_operator_data} gives equality of the
Cauchy data in Theorem~\ref{thm_main}, which therefore
applies.
\end{remark}

\subsection{Previous literature}

The fractional Calder\'on problem was introduced in
\cite{GSU_2020}, where an unknown potential in a bounded
Euclidean domain was recovered from exterior measurements
for the fractional Schr\"odinger equation. The proof combines
global unique continuation for the fractional Laplacian with
a Runge approximation property. Recovery of the potential
for a known variable principal part was established in
\cite{GLX_2017}. Subsequent results include uniqueness and
logarithmic stability for rough potentials \cite{RS_2020},
recovery from a single measurement \cite{GRSU_2020}, and
determination of drift terms and more general lower order
differential perturbations \cite{CLR_2020,CMRU_2022}.
Exponential instability for broad classes of potentials and
background metrics was proved in \cite{BCR_2025}.

The Caffarelli--Silvestre extension \cite{CS_2007} realizes
the fractional Laplacian through a local equation in one
additional variable. Its formulation for fractional powers
of elliptic operators \cite{ST_2010} has allowed this
perspective to be used with variable coefficients.
Unique continuation estimates obtained through the
Caffarelli--Silvestre extension have played a central role
in the qualitative and quantitative study of these inverse
problems; see \cite{Ruland_2015,RS_2020}.
For the fractional conductivity equation, a fractional
Liouville transform gives a reduction to potential recovery
\cite{Covi_2020}.

For the recovery of anisotropic leading order coefficients,
one approach reduces the nonlocal inverse problem to its
local elliptic counterpart. In \cite{CGRU_2026}, the
Caffarelli--Silvestre extension was used to show, in
dimensions three and higher, that partial exterior
Dirichlet-to-Neumann data for the fractional Calder\'on
problem determine the full boundary Dirichlet-to-Neumann
map for the corresponding local problem. This reduction
transfers uniqueness results for the local Calder\'on
problem to the nonlocal setting. A quantitative version
of this transfer, yielding stability estimates for
isotropic principal coefficients, was obtained in
\cite{BR_2026}. A related reduction for nonlocal parabolic
operators was established in \cite{LLU_2023}.
For an isotropic diffusion coefficient with an additive
absorption term, approximation results and local and
conditional global uniqueness were obtained in
\cite{LZ_2024}.

A different direction was initiated in \cite{Fei_2024}
and developed in \cite{FGKU}, where the fractional
anisotropic Calder\'on problem on closed manifolds was
resolved in the general smooth setting. The approach of
\cite{FGKU} recovers the local heat kernel for all positive
times and then passes to an inverse problem for the wave
equation. Thus the nonlocal measurements provide access
to evolution and spectral data, for which geometric
determination is available on general smooth manifolds.
The relevant inverse spectral and wave results belong
to the theory of the boundary control method; see
\cite{BK_1992,KKL_2001,HLOS_2018,BKL_2022, KKL_2008}.
This reduction to wave information permits geometric
recovery beyond the settings in which the local elliptic
Calder\'on problem is currently understood.

The subsequent work \cite{CO_2024} treats complete
Riemannian manifolds. A further perspective on the approach
of \cite{FGKU} was given in \cite{Ruland_2025}, where the
Caffarelli--Silvestre extension is used to recover the heat
and wave kernels. The distinction between the two lines
of work is therefore the information recovered from the
nonlocal measurements: the data of a local elliptic inverse
problem in the first case, and spectral or evolution data
in the second. For smooth metrics on $\mathbb R^n$ that
are Euclidean outside a compact set, recovery from partial
exterior measurements was proved in \cite{FGKRSU}, by
both a heat semigroup argument and a proof using the
Caffarelli--Silvestre extension. Related geometric
determination results have been obtained for fractional
connection Laplacians \cite{Chien_2023}, fractional Dirac
operators \cite{QU_2024}, and nonlocal parabolic equations
on closed manifolds \cite{Lin_2026}.

An additive potential outside the fractional power presents
a further difficulty, since $(-\Delta_g)^\alpha+V$ and
$-\Delta_g$ do not commute in general. In particular, this
problem differs from recovery for fractional powers of
a local operator that already contains lower order terms.
In our earlier work \cite{FKU}, we obtained the first
simultaneous recovery result for the manifold, metric,
and potential in this setting, under additional assumptions. An extension to fractional equations
with drift, retaining geometric hypotheses, was obtained
in \cite{Cai_Chen_2025}.

The work \cite{FKU} also established two results of
independent interest: an entanglement principle for
fractional powers of the Laplacian and a uniqueness
theorem for a variant of the Gel'fand inverse spectral
problem. The unique continuation property used in the
present paper is a special case of this entanglement
principle. Further extensions and applications of the
entanglement principle appear in
\cite{FL_2024,Lin_Hyperbolic_2026,LLY_2025}.
The inverse spectral approach was further developed
in \cite{FK_passive}.

\subsection{Outline of the proof}

Our proof follows a different approach from that of our
earlier work \cite{FKU}. We recover the local action of
the resolvent of the Laplace--Beltrami operator for every
positive spectral parameter, providing a direct route
from the fractional Cauchy data to geometric determination.
Let us describe the main steps.

Since the operators $(-\Delta_{g_j})^\alpha+V_j$, $j=1,2$,
need not be invertible, we first remove their kernels by
adding a common real-valued perturbation
$W\in C^\infty_0(O;\mathbb R)$, which leaves both Cauchy
data sets unchanged. The resulting operators
$P_j=(-\Delta_{g_j})^\alpha+Q_j$, where $Q_j=V_j+W$,
are both invertible. This step uses a kernel-dimension
argument, equality of the Cauchy data, and fractional
unique continuation, the latter being the one-function
case of the entanglement principle in \cite{FKU}.
Equality of the Cauchy data then gives equality of the
local source-to-solution maps
$f\mapsto(P_j^{-1}f)|_O$. Their leading symbols determine
the metrics on $O$.

We then use the comparison principle established in
Proposition~\ref{prop:paired} twice. It states that, once
the metrics agree on $O$, two smooth functions on the
respective manifolds with matching fractional Cauchy
pairs on $O$ have identical heat evolutions there for
every positive time. The functions need not vanish on
$O$ or be supported there. The proof combines finite speed of propagation for the
wave equation with the Gaussian representation of the
heat semigroup to obtain exponential smallness of the
difference of the heat evolutions and all its time
derivatives, locally uniformly on $O$, for short times. The heat semigroup representation of
the fractional Laplacian, together with these estimates,
leads to a moment argument, following \cite{FGKU},
which establishes equality of the heat evolutions on
$O$ for all positive times.

First, we apply the comparison principle to $P_j^{-1}f$,
with $f\in C^\infty_0(O)$, and take the Laplace transform
in time to obtain equality on $O$ of
$(-\Delta_{g_j}+z)^{-1}P_j^{-1}f$ for every $z>0$.
Both inverse operators are symmetric with respect to
the complex bilinear pairing $\int_{M_j}ab\,dV_{g_j}$.
Reversing their order by bilinear transposition therefore
gives equality on $O$ of
$P_j^{-1}(-\Delta_{g_j}+z)^{-1}f$.
This argument remains valid for complex-valued potentials.

A second application of the comparison principle,
combined with an ordinary differential equation in time
and the heat semigroup representation of the fractional
Laplacian, eliminates the factor $P_j^{-1}$ and yields
\[
\bigl((-\Delta_{g_1}+z)^{-1}f\bigr)|_O
=
\bigl((-\Delta_{g_2}+z)^{-1}f\bigr)|_O,
\qquad z>0,\quad f\in C^\infty_0(O).
\]
Thus the local resolvent of the Laplacian is recovered
from the original fractional data.

The heat semigroup representation of the resolvent,
together with uniqueness of the Laplace transform,
gives equality of the local heat kernels.
The geometric determination result
\cite[Theorem~3.5 and Remark~3.6]{FGKRSU}
then provides an isometry fixing the entire observation
set $O$; see also \cite{HLOS_2018,SS_2025}.
After identifying the manifolds by the recovered isometry,
fractional unique continuation shows that solutions with
matching Cauchy pairs coincide. A single solution corresponding
to a nonzero source then determines the potential on the
inaccessible region, and the known values of the potential
on $O$ complete the recovery.

The paper is organized as follows. Section~\ref{sec_prelim} collects the functional calculus identities and unique continuation result, and proves the heat comparison principle and the kernel-removal lemma. Section~\ref{sec_proof_main_theorem} carries out the reduction to the local Laplace resolvent and completes the proof of Theorem~\ref{thm_main}.

\section{Preliminaries}
\label{sec_prelim}

In the paper, all functions may be complex-valued, unless stated otherwise. In this section, we collect preliminary facts and prove some results that will be used in the proof of
Theorem~\ref{thm_main}, given in Section~\ref{sec_proof_main_theorem}.

\subsection{Functional calculus formulas for the fractional Laplacian, the resolvent and the heat semigroup}

\label{subsec_functional_calculus}

Let $(M,g)$ be a smooth closed Riemannian manifold of dimension
$n\geq2$, and write $L^2(M)=L^2(M,dV_g)$. Consider the strongly
continuous heat semigroup
$e^{t\Delta_g}:L^2(M)\to L^2(M)$, $t\geq0$, associated with the
nonnegative self-adjoint operator $-\Delta_g$.

By \cite[Theorem~3.5]{St}, its heat kernel
$e^{t\Delta_g}(x,y)\in C^\infty((0,\infty)\times M\times M;\mathbb R)$
is symmetric, $e^{t\Delta_g}(x,y)=e^{t\Delta_g}(y,x)$, and satisfies
$(e^{t\Delta_g}u)(x)=\int_M e^{t\Delta_g}(x,y)u(y)\,dV_g(y)$
for every $u\in L^2(M)$, $t>0$, and $x\in M$. The heat kernel is nonnegative and satisfies
$\int_M e^{t\Delta_g}(x,y)\,dV_g(y)=1$
for every $t>0$ and $x\in M$.
Moreover, the heat semigroup is a contraction on both $L^2(M)$
and $L^\infty(M)$:
\begin{equation}
\label{eq_3_1}
\|e^{t\Delta_g}u\|_{L^2(M)}
\leq \|u\|_{L^2(M)},
\qquad t\geq0,\quad u\in L^2(M),
\end{equation}
and
\begin{equation}
\label{eq:heatLinfty}
\|e^{t\Delta_g}u\|_{L^\infty(M)}
\leq \|u\|_{L^\infty(M)},
\qquad t\geq0,\quad u\in L^\infty(M).
\end{equation}

Let $\alpha\in(0,1)$.  The spectral theorem gives
the representation
\begin{equation}
\label{frac_laplace_heat}
(-\Delta_g)^\alpha u
=
\frac{1}{\Gamma(-\alpha)}
\int_0^\infty (e^{t\Delta_g}-I)u\,\frac{dt}{t^{1+\alpha}},
\end{equation}
for  all $u\in\mathcal D(-\Delta_g)=H^2(M)$. 
The integral in \eqref{frac_laplace_heat} is understood as an $L^2(M)$-valued Bochner integral; see \cite{FKU,Caffarelli_Stinga_2016}.

The resolvent also admits a representation in terms of the heat semigroup. For $z>0$,
\begin{equation}
\label{eq:laplaceresolvent}
(-\Delta_g+z)^{-1}u
=
\int_0^\infty e^{-zt}e^{t\Delta_g}u\,dt,
\end{equation}
for $u\in L^2(M)$. 
Here the integral is again an $L^2(M)$-valued Bochner integral: the integrand is continuous in $L^2(M)$, and \eqref{eq_3_1} gives
$\int_0^\infty\|e^{-zt}e^{t\Delta_g}u\|_{L^2(M)}\,dt
\leq z^{-1}\|u\|_{L^2(M)}$. Formula \eqref{eq:laplaceresolvent} follows from the spectral theorem and the scalar identity $\int_0^\infty e^{-t(z+\lambda)}\,dt=(z+\lambda)^{-1}$, valid for $\lambda\geq0$ and $z>0$.

In the applications below, the representations
\eqref{frac_laplace_heat} and \eqref{eq:laplaceresolvent}
are applied to smooth functions. For the pointwise
form of \eqref{frac_laplace_heat} in this case, see
\cite[Proposition~2.1]{FKU}.
We record that, for $u\in C^\infty(M)$, both integrals
converge in every Sobolev space $H^s(M)$,
$s\in\mathbb R$.

Indeed, fix $s\in\mathbb R$. The spectral norm
$\|(I-\Delta_g)^{s/2}v\|_{L^2(M)}$, initially defined for
$v\in C^\infty(M)$, is equivalent to the standard
$H^s(M)$ norm; see
\cite[Section~4.2.3, p.~103]{Sogge_book_eigenfunctions}.
Here $(I-\Delta_g)^{s/2}$ is defined by the self-adjoint
functional calculus. Since this operator commutes with
$e^{t\Delta_g}$, the spectral theorem and \eqref{eq_3_1}
give
$\|e^{t\Delta_g}v\|_{H^s(M)}
\leq C_s\|v\|_{H^s(M)}$
for all $v\in C^\infty(M)$ and $t\geq0$.
By density, this estimate extends to all $v\in H^s(M)$.
The strong continuity of the heat semigroup on $L^2(M)$,
together with commutation with $(I-\Delta_g)^{s/2}$
and the equivalence of Sobolev norms, also implies that
$t\mapsto e^{t\Delta_g}v$ is continuous from $[0,\infty)$
into $H^s(M)$ for every $v\in H^s(M)$.

For $u\in C^\infty(M)$, the identity
$(e^{t\Delta_g}-I)u=\int_0^t e^{r\Delta_g}\Delta_g u\,dr$
holds in $H^s(M)$ and yields
\[
\|(e^{t\Delta_g}-I)u\|_{H^s(M)}
\leq C_s t\|\Delta_g u\|_{H^s(M)}.
\]
On the other hand,
\[
\|(e^{t\Delta_g}-I)u\|_{H^s(M)}
\leq (C_s+1)\|u\|_{H^s(M)}.
\]
Splitting the integral at $t=1$, we obtain
\[
\int_0^\infty
\|(e^{t\Delta_g}-I)u\|_{H^s(M)}\,\frac{dt}{t^{1+\alpha}}
\leq
\frac{C_s}{1-\alpha}\|\Delta_g u\|_{H^s(M)}
+\frac{C_s+1}{\alpha}\|u\|_{H^s(M)}
<\infty.
\]
Similarly, for $z>0$,
\[
\int_0^\infty
\|e^{-zt}e^{t\Delta_g}u\|_{H^s(M)}\,dt
\leq C_s z^{-1}\|u\|_{H^s(M)}.
\]
Thus both integrals converge as $H^s(M)$-valued Bochner
integrals. Their values agree with those of the original
$L^2(M)$-valued integrals, so
\eqref{frac_laplace_heat} and \eqref{eq:laplaceresolvent}
hold in $H^s(M)$ for every $s\in\mathbb R$.

Choosing $s>n/2$ and using the continuous embedding
$H^s(M)\hookrightarrow C(M)$, we may also regard these
integrals as $C(M)$-valued Bochner integrals.
Since point evaluation is a bounded linear functional on
$C(M)$, it commutes with these integrals.
Consequently, for smooth $u$, both identities hold
pointwise on $M$ and hence on $O$, with evaluation at
each $x\in O$ passed under the integral sign.
We will use this observation repeatedly below.

We will also use the representation of the heat semigroup in terms
of the cosine propagator. For $u\in L^2(M)$,
\begin{equation}
\label{eq:gaussian}
e^{t\Delta_g}u
=
\frac{1}{\sqrt{4\pi t}}
\int_{\mathbb R}
e^{-s^2/(4t)}
\cos(s\sqrt{-\Delta_g})u\,ds,
\qquad t>0.
\end{equation}
Here $\cos(s\sqrt{-\Delta_g})$ is defined by the spectral theorem.
For $u\in C^\infty(M)$, the function
$b(s,x)=(\cos(s\sqrt{-\Delta_g})u)(x)$ belongs to
$C^\infty(\mathbb R\times M)$ and solves
$(\partial_s^2-\Delta_g)b=0$ on $\mathbb R\times M$,
with $b(0,\cdot)=u$ and $\partial_s b(0,\cdot)=0$.
These properties follow by differentiating its spectral
representation.

To justify \eqref{eq:gaussian}, recall the scalar Gaussian identity
\[
e^{-t\lambda}
=
\frac{1}{\sqrt{4\pi t}}
\int_{\mathbb R}
e^{-s^2/(4t)}\cos(s\sqrt{\lambda})\,ds,
\qquad t>0,\quad \lambda\geq0,
\]
which follows from the Fourier transform of the Gaussian;
see also \cite[Section~3, (3.5)]{FGKU}. The cosine propagator is strongly continuous on $L^2(M)$, and there holds
$$\|\cos(s\sqrt{-\Delta_g})u\|_{L^2(M)}\leq\|u\|_{L^2(M)} \quad \forall\, s\in\mathbb R.$$
Consequently, the integral in \eqref{eq:gaussian} converges
as an $L^2(M)$-valued Bochner integral, and applying the
spectral theorem to the scalar identity gives
\eqref{eq:gaussian} in $L^2(M)$.

For smooth $u$, the integral in \eqref{eq:gaussian} also
converges in every Sobolev space.
Indeed, the equivalence of the spectral and standard Sobolev norms,
together with $|\cos(s\sqrt{\lambda})|\leq1$, gives
\begin{equation}
\label{eq:waveSobolev}
\|\cos(s\sqrt{-\Delta_g})v\|_{H^r(M)}
\leq C_r\|v\|_{H^r(M)},
\qquad s\in\mathbb R,\quad v\in H^r(M),
\end{equation}
for every $r\in\mathbb R$, where $C_r$ depends only on
$(M,g)$, $r$, and the choice of Sobolev norm.
The spectral theorem and dominated convergence also show that
$s\mapsto\cos(s\sqrt{-\Delta_g})v$ is continuous in $H^r(M)$.
Consequently,
\[
\frac{1}{\sqrt{4\pi t}}
\int_{\mathbb R}
e^{-s^2/(4t)}
\|\cos(s\sqrt{-\Delta_g})v\|_{H^r(M)}\,ds
\leq C_r\|v\|_{H^r(M)},
\qquad t>0.
\]
Thus, for smooth $u$, \eqref{eq:gaussian} holds as an
$H^r(M)$-valued Bochner integral for every $r\in\mathbb R$.
Choosing $r>n/2$ and using Sobolev embedding, we may also
regard it as a $C(M)$-valued Bochner integral.
Since point evaluation commutes with this integral,
\eqref{eq:gaussian} holds pointwise on $M$, and restriction
to any open subset may be passed under the integral sign.

\subsection{Unique continuation properties for the fractional Laplacian}

We recall the following fractional unique continuation property, which is the case $N=1$ of \cite[Theorem~1.8]{FKU}. 

\begin{proposition}
\label{prop:fracUC}
Let $(M,g)$ be a smooth closed connected Riemannian manifold of dimension $n\geq2$, let $\alpha\in(0,1)$, and let $O\subset M$ be a nonempty open set. If $u\in C^\infty(M;\mathbb C)$ satisfies $u|_O=0$ and $\bigl((-\Delta_g)^\alpha u\bigr)|_O=0$, then
$u=0$ on $M$.
\end{proposition}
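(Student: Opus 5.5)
Since Proposition~\ref{prop:fracUC} is the case $N=1$ of \cite[Theorem~1.8]{FKU}, I would prove it by specializing that argument rather than redoing it. Still, the structure I would follow is the heat-semigroup route already prepared in Section~\ref{subsec_functional_calculus}.

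Let $u\in C^\infty(M)$ satisfy $u|_O=0$ and $((-\Delta_g)^\alpha u)|_O=0$. Fix $x\in O$ and apply \eqref{frac_laplace_heat} pointwise, which is justified for smooth $u$ as explained above. Because $u(x)=0$, this gives
\[
0=\int_0^\infty (e^{t\Delta_g}u)(x)\,\frac{dt}{t^{1+\alpha}},\qquad x\in O.
\]
The same identity holds after applying $\Delta_g^k$. Indeed, $\Delta_g^k u$ also vanishes on $O$, and $(-\Delta_g)^\alpha\Delta_g^k u=\Delta_g^k(-\Delta_g)^\alpha u$ vanishes on $O$, since a differential operator preserves vanishing on an open set. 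Using $\Delta_g^k e^{t\Delta_g}=\partial_t^k e^{t\Delta_g}$, we obtain
\[
\int_0^\infty \partial_t^k(e^{t\Delta_g}u)(x)\,t^{-1-\alpha}\,dt=0\quad\text{for all }k\geq0.
\]

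Next I would obtain short-time smallness. Write $(e^{t\Delta_g}u)(x)$ by the Gaussian formula \eqref{eq:gaussian} in terms of $b(s,x)=(\cos(s\sqrt{-\Delta_g})u)(x)$. By finite speed of propagation, $b(s,\cdot)=0$ on $O_\delta=\{y:\operatorname{dist}(y,M\setminus O)>\delta\}$ whenever $|s|<\delta$. Hence for $x\in O_\delta$ the function $v(t)=(e^{t\Delta_g}u)(x)$ satisfies $|\partial_t^k v(t)|\leq C_k e^{-\delta^2/(5t)}$ as $t\to0^+$. So $v$ extends to a smooth function vanishing to infinite order at $t=0$.

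In the moment step I would integrate by parts repeatedly in the identities above, using this flatness at $0$ together with the spectral expansion $v(t)=\sum_j e^{-t\lambda_j}c_j(x)$ for the behaviour at $\infty$. The aim is to conclude that all the Mellin-type moments $\int_0^\infty v(t)\,t^{-1-\alpha-k}\,dt$ vanish. Then $F(\zeta)=\int_0^\infty v(t)t^{-\zeta}\,dt$ is analytic, and $F(1+\alpha+k)=0$ for all $k\geq0$. Combined with the flatness of $v$ at $0$, this should force $v\equiv0$. Equivalently, in spectral terms, $\sum_j c_j(x)\lambda_j^{\alpha+k}=0$ for all $k$ in a suitable regularized sense. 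The separated powers $\lambda^{\alpha+k}$ together with the Gaussian decay yield a Carleman/Müntz-type uniqueness argument, exactly as in \cite{FGKU}.

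Once $e^{t\Delta_g}u=0$ on $O_\delta$ for all $t>0$, letting $t\to0$ with $\delta$ arbitrary recovers nothing new. Instead, one uses that every spectral projection $\sum_{\lambda_j=\lambda}c_j(x)$ vanishes on $O$. Each such sum is an eigenfunction vanishing on an open set, so it vanishes identically by classical unique continuation for $-\Delta_g-\lambda$ on the connected manifold $M$. Hence $u=0$.

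The main obstacle is the moment step: turning the single weighted identity, together with its $\Delta_g^k$-iterates and the exponential flatness, into vanishing of $v$. The difficulty is that $\alpha\notin\mathbb Z$ is essential there, since for integer powers the statement fails. I would first try the analytic continuation of the Mellin transform, using that $\Gamma(-\alpha-k)\neq0$.
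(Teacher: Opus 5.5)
The paper gives no argument for this statement; it quotes the case $N=1$ of \cite[Theorem~1.8]{FKU}. Your outline is a self-contained alternative, and most of it is sound. The following steps are all correct:
the identities $\int_0^\infty\partial_t^k v(t)\,t^{-1-\alpha}\,dt=0$;
the bound $|\partial_t^k v(t)|\leq C_k e^{-c/t}$, obtained from \eqref{eq:gaussian} and finite propagation speed applied to $\Delta_g^k u$;
the integration by parts yielding $\int_0^\infty v(t)\,t^{-1-\alpha-k}\,dt=0$;
and the final step. There, for smooth $u$ the series $\sum_\lambda e^{-t\lambda}(\Pi_\lambda u)(x)$ converges absolutely, so letting $t\to\infty$ peels off each $(\Pi_\lambda u)(x)$, and unique continuation for $-\Delta_g-\lambda$ on the connected $M$ finishes. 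In fact, everything up to $(e^{t\Delta_g}u)|_O=0$ is Proposition~\ref{prop:paired} with $M_1=M_2=M$, $a_1=u$, $a_2=0$, whose proof does not use Proposition~\ref{prop:fracUC}.

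The gap is exactly the step you flag as the main obstacle, and none of the mechanisms you propose closes it.
\begin{itemize}
\item Zeros of the Mellin transform at $\zeta=1+\alpha+k$, $k\geq0$, do not force it to vanish: the entire function $1/\Gamma(1+\alpha-\zeta)$ has precisely these zeros. Moreover, $|v(t)|\leq Ce^{-c/t}$ only bounds the Mellin transform by a factorially growing quantity, so Carlson's theorem is not available.
\item The remark $\Gamma(-\alpha-k)\neq0$, like the spectral identities $\sum_j c_j(x)\lambda_j^{\alpha+k}=0$, only restates the hypothesis $((-\Delta_g)^{\alpha+k}u)(x)=0$.
\item Infinite-order flatness at $t=0$ does not suffice either. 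Since $\int_0^\infty r^k e^{-r^{1/4}}\sin(r^{1/4})\,dr=0$ for all $k\geq0$ (Stieltjes), the function $t^{\alpha-1}e^{-t^{-1/4}}\sin(t^{-1/4})$ is flat at $0$ and all your moments vanish for it.
\end{itemize}

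The missing idea is to use the exponential rate you already derived quantitatively, as at the end of the proof of Proposition~\ref{prop:paired}. Set $r=1/t$ and $F(r)=r^{\alpha-1}v(1/r)$. The moment identities become $\int_0^\infty r^kF(r)\,dr=0$ for $k\geq0$. The bound gives $|F(r)|\leq Cr^{\alpha-1}e^{-cr}$, which is integrable because $\alpha>0$. Hence $\widehat F(\xi)=\int_0^\infty F(r)e^{-i\xi r}\,dr$ is holomorphic on $\{\operatorname{Im}\xi<c\}$, with all derivatives zero at $\xi=0$. Therefore $\widehat F\equiv0$, and so $F\equiv0$. With this step supplied, your argument proves the proposition.
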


\subsection{Heat comparison principle}

We first recall the following finite propagation property for the
wave equation, which will be used in the proof of
Proposition~\ref{prop:paired}.
For a more general statement on complete manifolds, see
\cite[Appendix, Theorem~9]{HLOS_2018}.

\begin{proposition}
\label{prop:finiteprop}
Let $(M,g)$ be a smooth closed Riemannian manifold, let $p\in M$,
and let $T>0$. Suppose that
$w,F\in C^\infty([0,T]\times M;\mathbb C)$ satisfy
$(\partial_s^2-\Delta_g)w=F$ on $(0,T)\times M$.
Set
\[
\mathcal C_{p,T}
=
\{(s,y)\in[0,T)\times M:
d_g(p,y)<T-s\}.
\]
If $w(0,\cdot)=\partial_s w(0,\cdot)=0$ on $B_g(p,T)$
and $F=0$ in $\mathcal C_{p,T}$, then
$w=0$ in $\mathcal C_{p,T}$.
\end{proposition}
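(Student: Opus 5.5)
The plan is a standard energy argument on the backward light cone $\mathcal C_{p,T}$; we may assume $T$ is less than the injectivity radius $\mathrm{inj}(M,g)$, reducing to this case by subdividing the time interval, as explained at the end. For $0\le s<T$ set
\[
E(s)=\frac12\int_{B_g(p,T-s)}\bigl(|\partial_s w(s,y)|^2+|\nabla_g w(s,y)|_g^2+|w(s,y)|^2\bigr)\,dV_g(y).
\]
The hypotheses give $E(0)=0$, since $w(0,\cdot)=\partial_sw(0,\cdot)=0$ on $B_g(p,T)$, and hence $\nabla_g w(0,\cdot)=0$ there as well.

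We differentiate $E$ in $s$. Since $T<\mathrm{inj}(M,g)$, the function $r(y)=d_g(p,y)$ is smooth on $B_g(p,T)\setminus\{p\}$ with $|\nabla_g r|_g=1$. By the coarea formula, $\frac{d}{ds}\int_{B_g(p,R(s))}\phi\,dV_g=\int_{B_g(p,R(s))}\partial_s\phi\,dV_g+R'(s)\int_{\partial B_g(p,R(s))}\phi\,d\sigma$. With $R(s)=T-s$ this gives
\[
E'(s)=\operatorname{Re}\int_{B}\bigl(\partial_s^2w\,\overline{\partial_sw}+\langle\nabla_g\partial_sw,\overline{\nabla_gw}\rangle_g+w\,\overline{\partial_sw}\bigr)\,dV_g-\frac12\int_{\partial B}\bigl(|\partial_sw|^2+|\nabla_gw|^2+|w|^2\bigr)\,d\sigma .
\]
We integrate the gradient term by parts via Green's formula and substitute $\partial_s^2w-\Delta_gw=F$. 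Inside $\mathcal C_{p,T}$ we have $F=0$, so the interior terms reduce to $\operatorname{Re}\int_B w\,\overline{\partial_s w}\,dV_g\le E(s)$. The only boundary term that survives is $\operatorname{Re}\int_{\partial B}\partial_\nu w\,\overline{\partial_sw}\,d\sigma$, and by Cauchy--Schwarz it is at most $\frac12\int_{\partial B}(|\partial_\nu w|^2+|\partial_sw|^2)\,d\sigma$. This is absorbed by the negative flux term; this is exactly where unit propagation speed enters. We therefore obtain $E'(s)\le E(s)$. Grönwall's inequality and $E(0)=0$ then give $E\equiv0$ on $[0,T)$, so $w=0$ in $\mathcal C_{p,T}$.

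The main obstacle is the regularity needed to differentiate over the moving ball. The boundary $\partial B_g(p,R)$ need not be smooth once $R$ exceeds the injectivity radius, and the coarea step requires $r$ to be smooth away from $p$. This is why we first restrict to small $T$. To handle general $T$, we cover the cone by small cones: for $y$ with $d_g(p,y)<T-s$ and a small step $\delta<\mathrm{inj}(M,g)$, we apply the small-time result iteratively on the time slabs $[k\delta,(k+1)\delta]$ with vertices $q$ satisfying $d_g(p,q)<T-k\delta-\delta$. At each stage the Cauchy data at time $k\delta$ vanish on $B_g(q,\delta)\subset B_g(p,T-k\delta)$ by the previous step and the triangle inequality, and $F$ vanishes on the small cone because it lies inside $\mathcal C_{p,T}$, again by the triangle inequality. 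Since $w$ and $F$ are smooth up to $s=0$, no further regularity issues arise.
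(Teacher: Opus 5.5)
The paper does not prove this proposition. It cites the more general finite propagation result on complete manifolds in \cite[Appendix, Theorem~9]{HLOS_2018}, so your argument is a different, self-contained route. It is the standard energy proof and it is correct. For $T<\mathrm{inj}(M,g)$ the spheres $\partial B_g(p,T-s)$ are smooth, so the coarea and Green formulas apply. The balls $B_g(p,T-s)$ are exactly the time slices of $\mathcal C_{p,T}$, so $F=0$ there. Since $|\partial_\nu w|\le|\nabla_g w|_g$, the flux term $-\frac12\int_{\partial B}(|\partial_s w|^2+|\nabla_g w|_g^2+|w|^2)\,d\sigma$ dominates $\operatorname{Re}\int_{\partial B}\partial_\nu w\,\overline{\partial_s w}\,d\sigma$. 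What your route buys is a proof under exactly the stated hypotheses. Note also that in the proof of Proposition~\ref{prop:paired} the radius $\delta$ may be shrunk below the injectivity radius of $(M_1,g_1)$, so your small-$T$ case already covers the paper's application. The cited theorem gives the statement for all $T$ on complete manifolds.

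Two details in your iteration for general $T$ need tidying.

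\textbf{The final slab.} With the strict condition $d_g(p,q)<T-(k+1)\delta$, there are no admissible vertices in the last slab when $T=N\delta$. For example, the points $(s,p)$ with $s\in[T-\delta,T)$ are then not covered. Allow $d_g(p,q)\le T-(k+1)\delta$; both triangle-inequality inclusions stay strict, because $d_g(q,y)<\delta$ and $d_g(q,y)<(k+1)\delta-s$ are strict.

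\textbf{The covering claim.} You need to justify that the small cones fill each slab of $\mathcal C_{p,T}$. For $(s,y)$ in the slab, place $q$ on a minimizing geodesic from $p$ to $y$ at distance slightly less than $(k+1)\delta-s$ from $y$, or take $q=p$ if $y$ is closer to $p$ than that.

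A cleaner alternative avoids both issues. To show $w(s_0,y_0)=0$, take $\delta=s_0/N$ with $N$ large, so that $s_0$ is a grid time. At each step, vanishing of $w$ and $\partial_s w$ at time $(k+1)\delta$ on $B_g(p,T-(k+1)\delta)$ follows by continuity in $s$. Indeed, each $y$ in that ball is itself an admissible vertex, so $w(\cdot,y)=0$ on $[k\delta,(k+1)\delta)$.
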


The following comparison principle shows that, when the metrics
agree on the observation set, matching fractional Cauchy pairs
imply matching heat evolutions there for all positive times.
For related results and methods of proof, see
\cite[Section~2.1]{FGKU} and
\cite[Proof of Lemma~3.1]{FGKRSU}.

\begin{proposition}
\label{prop:paired}
Let $\alpha\in(0,1)$. For $j=1,2$, let $(M_j,g_j)$ be
smooth closed Riemannian manifolds of dimension $n\geq2$
with a common nonempty open set $O$, and suppose that
$g_1|_O=g_2|_O$.
Let $a_j\in C^\infty(M_j;\mathbb C)$ satisfy
\begin{equation}
\label{eq:pair}
a_1|_O=a_2|_O,\qquad
\bigl((-\Delta_{g_1})^\alpha a_1\bigr)|_O
=
\bigl((-\Delta_{g_2})^\alpha a_2\bigr)|_O.
\end{equation}
Then
\begin{equation}
\label{eq:pairedheat}
(e^{t\Delta_{g_1}}a_1)|_O
=
(e^{t\Delta_{g_2}}a_2)|_O,
\qquad t>0.
\end{equation}
\end{proposition}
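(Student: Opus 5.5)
Set $U_j(t)=e^{t\Delta_{g_j}}a_j$ and $D(t,x)=U_1(t,x)-U_2(t,x)$ for $x\in O$. The plan has three steps: show that $D$ and all its time derivatives are exponentially small for short times, locally uniformly on $O$; feed this into the heat-semigroup formula \eqref{frac_laplace_heat} to get a family of moment identities; and conclude from these that $D\equiv 0$ for all $t>0$.

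\textbf{Step 1: short-time smallness from finite propagation.} Fix a compact $K\subset O$ and choose $\delta>0$ such that every geodesic ball $B_{g_j}(p,2\delta)$ with $p\in K$ lies in $O$. Since $g_1=g_2$ on $O$, these balls agree for $j=1,2$. Set $b_j(s)=\cos(s\sqrt{-\Delta_{g_j}})a_j$ and $w=b_1-b_2$ on $[0,\delta]\times B(p,\delta)$. The operator $\Delta_{g_1}$ equals $\Delta_{g_2}$ on $O$, and $a_1=a_2$ on $O$, so $w$ solves the wave equation there with zero Cauchy data. Proposition~\ref{prop:finiteprop} does not apply verbatim, because $w$ is only defined on $O$. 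I would localize with a cutoff $\chi\in C_0^\infty(O)$ equal to $1$ near $\overline{B(p,2\delta)}$ and apply the proposition on $M_1$ to $\chi w$; the source $F$ is then supported where $\chi$ is nonconstant, outside the cone. This gives $b_1=b_2$ on $\{(s,y): d(p,y)<\delta-|s|\}$, and in particular at $y=p$ for $|s|<\delta$. The constants are uniform in $p\in K$, so $b_1(s,p)=b_2(s,p)$ for $|s|<\delta$ and $p\in K$.

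Insert this into the Gaussian formula \eqref{eq:gaussian}. Only $|s|\ge\delta$ contributes to $D$, and by \eqref{eq:waveSobolev} with Sobolev embedding $|b_j(s,p)|\le C$ uniformly in $s$. Hence
\[
|D(t,p)|\le Ct^{-1/2}\int_{|s|\ge\delta}e^{-s^2/4t}\,ds\le Ce^{-\delta^2/(8t)}.
\]
For time derivatives, use $\partial_t^kU_j=e^{t\Delta_{g_j}}\Delta_{g_j}^ka_j$. On $O$ the functions $\Delta_{g_1}^ka_1$ and $\Delta_{g_2}^ka_2$ coincide, so the same argument applies. Thus $\sup_K|\partial_t^kD(t)|\le C_ke^{-c/t}$ as $t\to0^+$.

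\textbf{Step 2: moment identities.} Evaluate \eqref{frac_laplace_heat} pointwise on $O$, which is legitimate by the Bochner-integral discussion above. Using $a_1=a_2$ and $(-\Delta_{g_1})^\alpha a_1=(-\Delta_{g_2})^\alpha a_2$ on $O$ gives $\int_0^\infty D(t,x)\,t^{-1-\alpha}\,dt=0$. The same identity holds with $a_j$ replaced by $\Delta_{g_j}^ka_j$: these still agree on $O$, and their fractional Laplacians $\Delta^k_{g_j}(-\Delta_{g_j})^\alpha a_j$ agree on $O$ because $\Delta^k$ is local and the metrics agree there. This yields
\[
\int_0^\infty \partial_t^kD(t,x)\,t^{-1-\alpha}\,dt=0,\qquad k\ge0,\ x\in O.
\]
Integrating by parts, with boundary terms at $0$ killed by the $e^{-c/t}$ bound and at $\infty$ controlled by boundedness of the semigroup, converts these into $\int_0^\infty D(t,x)\,t^{-1-\alpha-k}\,dt=0$ for all $k$.

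\textbf{Step 3: concluding $D\equiv0$ (main obstacle).} Moments in negative powers alone do not force $D=0$. The approach of \cite{FGKU} uses the spectral structure: $D(t,x)=\sum_j c_j(x)e^{-\lambda_j t}$, a combination of exponentials coming from the two spectra. Substituting $\tau=1/t$ turns the identities into $\int_0^\infty \tilde D(\tau)\,\tau^{\alpha+k-1}\,d\tau=0$, where $\tilde D(\tau)=D(1/\tau)$ decays like $e^{-c\tau}$. Then $\tilde D$ extends analytically in the variable $1/\tau$ and is bounded by the Dirichlet series. I would argue that $F(\zeta)=\int_0^\infty \tilde D(\tau)\tau^{\alpha-1}e^{-\zeta\tau}\,d\tau$ is holomorphic in $\operatorname{Re}\zeta>-c$ and has all its Taylor coefficients at $\zeta=0$ vanishing. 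Hence $F\equiv0$, so $\tilde D\,\tau^{\alpha-1}=0$ by Laplace uniqueness, and therefore $D=0$. The step requiring care is this analyticity, which rests on the exponential decay of all derivatives from Step 1. The exact indexing of the moments may need to follow \cite{FGKU}.
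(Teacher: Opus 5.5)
Your proposal is correct and follows essentially the same route as the paper. It uses finite propagation for a cut-off difference of cosine propagators together with the Gaussian formula to get exponential short-time smallness of $\partial_t^k D$, then moment identities from applying $\Delta^k$ and the heat-semigroup formula, integration by parts, the substitution $\tau=1/t$, and vanishing to infinite order at the origin of a holomorphic transform (your Laplace variable $\zeta$ is the paper's $i\xi$). The spectral/Dirichlet-series structure you invoke in Step 3 is not needed: holomorphy of $F$ on $\operatorname{Re}\zeta>-c$ uses only boundedness of $D$ and exponential decay of $D$ itself as $t\downarrow 0$, while the decay of the higher time derivatives is what removes the boundary terms in Step 2.
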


\begin{proof}
Write $g$ for the common metric on $O$.
To show \eqref{eq:pairedheat}, we follow the method of
\cite[Section~2.1]{FGKU}, with the required small-time estimates
obtained using 
finite speed of propagation
for the wave equation.
For $m=0,1,2,\ldots$ and $v\in C^\infty(M_j)$, locality gives
$\Delta_g^m(v|_O)=(\Delta_{g_j}^m v)|_O$.
Thus, applying $\Delta_g^m$ to the second identity in
\eqref{eq:pair} and using the commutation of the powers
of each individual Laplacian, we obtain
\begin{equation}
\label{eq_7_1}
\bigl((-\Delta_{g_1})^\alpha\Delta_{g_1}^m a_1\bigr)|_O
=
\bigl((-\Delta_{g_2})^\alpha\Delta_{g_2}^m a_2\bigr)|_O.
\end{equation}

Using \eqref{frac_laplace_heat} in \eqref{eq_7_1},
together with the first identity in \eqref{eq:pair} and locality,
we get
\begin{equation}
\label{eq_7_1_1}
\int_0^\infty
\bigg(
(e^{t\Delta_{g_1}}\Delta_{g_1}^m a_1)(x)
-
(e^{t\Delta_{g_2}}\Delta_{g_2}^m a_2)(x)
\bigg)
\frac{dt}{t^{1+\alpha}}
=0,
\qquad x\in O.
\end{equation}
Here evaluation at $x$ may be passed under the integrals
in \eqref{frac_laplace_heat}, since these integrals converge
in $C(M_j)$ by the Sobolev convergence established above.

For smooth initial data, functional calculus gives
\begin{equation}
\label{eq_7_2_1}
\partial_t^m e^{t\Delta_{g_j}}a_j
=
e^{t\Delta_{g_j}}\Delta_{g_j}^m a_j.
\end{equation}
Setting
\begin{equation}
\label{eq_7_2}
h(t,x)
=
(e^{t\Delta_{g_1}}a_1)(x)
-
(e^{t\Delta_{g_2}}a_2)(x),
\qquad t\geq0,\quad x\in O,
\end{equation}
and using \eqref{eq_7_2_1}, we obtain from
\eqref{eq_7_1_1} that
\begin{equation}
\label{eq_7_3}
\int_0^\infty
\partial_t^m h(t,x)\,\frac{dt}{t^{1+\alpha}}
=0,
\qquad x\in O,
\end{equation}
for $m=0,1,2,\ldots$.
As in \cite[Section~2.1]{FGKU}, we will integrate by parts
in \eqref{eq_7_3}. We claim that there are no contributions
from the endpoints.

To prove this claim, we first establish exponential smallness
of $h$ and all its time derivatives as $t\downarrow0$,
locally uniformly in $O$.
Fix a compact set $K\Subset O$. Choose an open set $U$
such that $K\subset U\Subset O$, and choose
$\chi\in C^\infty_0(O)$ such that $\chi=1$ in a neighborhood
of $\overline U$.
For $m=0,1,2,\ldots$, define
\[
b_{j,m}(s,x)
=
\bigl(\cos(s\sqrt{-\Delta_{g_j}})
       \Delta_{g_j}^m a_j\bigr)(x),
\qquad (s,x)\in\mathbb R\times M_j.
\]
Then $b_{j,m}\in C^\infty(\mathbb R\times M_j)$
and solves the initial value problem
\begin{equation}
\label{eq:waveproblems}
\begin{cases}
(\partial_s^2-\Delta_{g_j})b_{j,m}=0
    &\text{on }\mathbb R\times M_j,\\
b_{j,m}(0,\cdot)=\Delta_{g_j}^m a_j
    &\text{on }M_j,\\
\partial_s b_{j,m}(0,\cdot)=0
    &\text{on }M_j.
\end{cases}
\end{equation}
By locality of the Laplacian, the first identity in
\eqref{eq:pair} implies
$(\Delta_{g_1}^m a_1)|_O=(\Delta_{g_2}^m a_2)|_O$.
Hence $d_m=b_{1,m}-b_{2,m}$, defined on
$\mathbb R\times O$, satisfies
$(\partial_s^2-\Delta_g)d_m=0$ there, with
$d_m(0,\cdot)=\partial_s d_m(0,\cdot)=0$ on $O$.

To apply finite propagation on a single closed manifold, define
\[
\widetilde b_m(s,x)
=
\begin{cases}
\chi(x)d_m(s,x),&x\in O,\\
0,&x\in M_1\setminus O.
\end{cases}
\]
Since $\chi$ is compactly supported in $O$,
$\widetilde b_m\in C^\infty(\mathbb R\times M_1)$.
It satisfies the initial value problem
\[
\begin{cases}
(\partial_s^2-\Delta_{g_1})\widetilde b_m=F_m
    &\text{on }\mathbb R\times M_1,\\
\widetilde b_m(0,\cdot)=0
    &\text{on }M_1,\\
\partial_s\widetilde b_m(0,\cdot)=0
    &\text{on }M_1,
\end{cases}
\]
where $F_m=-[\Delta_g,\chi]d_m$ on $\mathbb R\times O$
and is extended by zero outside $O$.
Since $\chi$ is compactly supported in $O$,
$F_m\in C^\infty(\mathbb R\times M_1)$.
Moreover, $F_m=0$ on $\mathbb R\times U$, since
$\chi=1$ near $\overline U$.

Choose $\delta>0$ such that
$B_{g_1}(x,\delta)\subset U$ for every $x\in K$.
Such a choice is possible by compactness of $K$.
For each $x\in K$, the cone $\mathcal C_{x,\delta}$ from
Proposition~\ref{prop:finiteprop}, formed using the metric $g_1$,
is contained in $[0,\delta)\times U$.
Hence, by Proposition~\ref{prop:finiteprop},
$\widetilde b_m(s,x)=0$ for $x\in K$ and $0\leq s<\delta$.
Since $\chi=1$ on $K$ and the cosine propagators are even in $s$,
we obtain
\begin{equation}
\label{eq:shortwave}
b_{1,m}(s,x)=b_{2,m}(s,x),
\qquad x\in K,\quad |s|<\delta.
\end{equation}
The same $\delta$ works for every $m$: it depends only on
$K$, $U$, and $g_1$.

Fix $r>n/2$. By \eqref{eq:waveSobolev} and Sobolev embedding,
there is a constant $C_r$, depending only on the two manifolds,
$r$, and the Sobolev norms, such that
\begin{equation}
\label{eq_7_6}
\sup_{s\in\mathbb R}\sup_{x\in O}
|b_{1,m}(s,x)-b_{2,m}(s,x)|
\leq B_m,
\qquad
B_m
=
C_r\sum_{j=1}^2
\|\Delta_{g_j}^m a_j\|_{H^r(M_j)}.
\end{equation}

Using \eqref{eq_7_2}, \eqref{eq_7_2_1}, and
\eqref{eq:gaussian}, we get, for $x\in K$ and $t>0$,
\begin{align*}
\partial_t^m h(t,x)
&=
(e^{t\Delta_{g_1}}\Delta_{g_1}^m a_1)(x)
-
(e^{t\Delta_{g_2}}\Delta_{g_2}^m a_2)(x)\\
&=
\frac{1}{\sqrt{4\pi t}}
\int_{\mathbb R}
e^{-s^2/(4t)}
\bigl(b_{1,m}(s,x)-b_{2,m}(s,x)\bigr)\,ds\\
&=
\frac{1}{\sqrt{4\pi t}}
\int_{|s|\geq\delta}
e^{-s^2/(4t)}
\bigl(b_{1,m}(s,x)-b_{2,m}(s,x)\bigr)\,ds.
\end{align*}
In the last equality, we used \eqref{eq:shortwave}.
Hence, using \eqref{eq_7_6} and
$e^{-s^2/(4t)}
\leq e^{-\delta^2/(8t)}e^{-s^2/(8t)}$
when $|s|\geq\delta$, we get
\begin{align*}
\sup_{x\in K}|\partial_t^m h(t,x)|
\leq
\frac{B_m}{\sqrt{4\pi t}}
\int_{|s|\geq\delta}e^{-s^2/(4t)}\,ds
\leq
\frac{B_m e^{-\delta^2/(8t)}}{\sqrt{4\pi t}}
\int_{\mathbb R}e^{-s^2/(8t)}\,ds=
\sqrt{2}\,B_m e^{-\delta^2/(8t)}.
\end{align*}
In particular,
\begin{equation}
\label{eq:smalltime}
\sup_{x\in K}|\partial_t^m h(t,x)|
\leq C_{K,m}e^{-c_K/t},
\qquad 0<t\leq1,\quad m=0,1,2,\ldots,
\end{equation}
where we may take $c_K=\delta^2/8$ and
$C_{K,m}=\sqrt{2}\,B_m$.
The exponent $c_K$ is independent of $m$, $\alpha$, and
$a_1,a_2$. No uniform control of $C_{K,m}$ as $m$ varies
is needed.

Also, \eqref{eq:heatLinfty} gives
\begin{equation}
\label{eq:pairedlarge}
\sup_{x\in O}|\partial_t^m h(t,x)|
\leq
\sum_{j=1}^2
\|\Delta_{g_j}^m a_j\|_{L^\infty(M_j)},
\qquad t>0.
\end{equation}

We now return to \eqref{eq_7_3}.
Fix $x\in O$ and choose $K\Subset O$ containing $x$
in its interior.
By \eqref{eq:smalltime}, all boundary terms at $t=0$
vanish when integrating by parts.
The boundary terms at $t=+\infty$ vanish by
\eqref{eq:pairedlarge} and the negative powers of $t$.
This establishes the claim.

Integrating by parts $m$ times in \eqref{eq_7_3}, we obtain
\begin{equation}
\label{eq:momentsh}
\int_0^\infty
h(t,x)\,\frac{dt}{t^{1+\alpha+m}}
=0,
\qquad m=0,1,2,\ldots.
\end{equation}
To conclude, we follow \cite[Section~2.1]{FGKU}; see also
\cite[Proof of Lemma~3.1]{FGKRSU}.

Making the change of variables $r=1/t$ in
\eqref{eq:momentsh}, we get
\begin{equation}
\label{eq_8_1}
\int_0^\infty r^mF(r)\,dr=0,
\qquad m=0,1,2,\ldots,
\end{equation}
where $F(r)=r^{\alpha-1}h(1/r,x)$ for $r>0$.
By \eqref{eq:smalltime} and \eqref{eq:pairedlarge},
\[
|F(r)|\leq C e^{-cr}r^{\alpha-1},
\qquad r>0,
\]
for some constants $C,c>0$.
Since $\alpha>0$, the extension of $F$ by zero to
$(-\infty,0]$ belongs to $L^1(\mathbb R)$.
Furthermore, by \cite[Theorem~3.3.7]{Lerner_book},
its Fourier transform
\[
\widehat F(\xi)
=
\int_0^\infty F(r)e^{-i\xi r}\,dr
\]
extends holomorphically to $\{\operatorname{Im}\xi<c\}$.
The moment identities \eqref{eq_8_1} imply that $\widehat F$ vanishes
at zero with all derivatives. By holomorphy and uniqueness
of the Fourier transform, it follows that $F(r)=0$ for
$r>0$. Hence $h(t,x)=0$ for every $t>0$.
Since $x\in O$ was arbitrary, \eqref{eq:pairedheat} follows.
\end{proof}

\subsection{Fredholm properties and removal of the kernel}
For any potential $Q\in C^\infty(M;\mathbb C)$, the operator $P=(-\Delta_g)^\alpha+Q:H^{2\alpha}(M)\to L^2(M)$ 
is Fredholm of index zero. Indeed, since $(-\Delta_g)^\alpha$ is nonnegative and self-adjoint, $-1$ belongs to its resolvent
set. Moreover, the graph norm of $(-\Delta_g)^\alpha$ is equivalent to the standard Sobolev norm on $H^{2\alpha}(M)$:
\begin{equation}
\label{eq:graph}
C^{-1}\|u\|_{H^{2\alpha}(M)}
\leq \|u\|_{L^2(M)}+\|(-\Delta_g)^\alpha u\|_{L^2(M)}
\leq C\|u\|_{H^{2\alpha}(M)},
\qquad u\in H^{2\alpha}(M),
\end{equation}
where $C>0$ depends only on $(M,g)$ and $\alpha$; see \cite[Section~4.2.3, p.~103]{Sogge_book_eigenfunctions} and \cite[Theorem~4.4]{St}. For $f\in L^2(M)$, the spectral theorem shows that
$u=\bigl((-\Delta_g)^\alpha+1\bigr)^{-1}f$ satisfies
\[
\|u\|_{L^2(M)}\leq\|f\|_{L^2(M)},
\qquad
\|(-\Delta_g)^\alpha u\|_{L^2(M)}\leq\|f\|_{L^2(M)}.
\]
Together with \eqref{eq:graph}, these bounds show that
$(-\Delta_g)^\alpha+1:H^{2\alpha}(M)\to L^2(M)$
is an isomorphism.

Multiplication by $Q-1$ is compact from $H^{2\alpha}(M)$
to $L^2(M)$, by the compact embedding
$H^{2\alpha}(M)\hookrightarrow L^2(M)$ and the boundedness
of multiplication by $Q-1$ on $L^2(M)$; see
\cite[Theorems~2.3.6 and~2.3.1]{Agranovich_book}.
Thus,
\[
P=\bigl((-\Delta_g)^\alpha+1\bigr)+(Q-1)
\]
is a compact perturbation of an isomorphism and hence is
Fredholm of index zero. Consequently, injectivity of $P$
implies invertibility. For a related Fredholm argument for the realization $(-\Delta_g)^\alpha+V:H^\alpha(M)\to H^{-\alpha}(M)$, see \cite[Proposition~2.2]{FKU}.

The following lemma allows us to remove the kernel of a
fractional Schr\"odinger operator by a real-valued perturbation
supported in the observation set. We will use this result
in the proof of Theorem~\ref{thm_main}.

\begin{lemma}
\label{lem:removeone}
Let $(M,g)$ be a smooth closed connected Riemannian manifold of dimension $n\geq2$, let $\alpha\in(0,1)$, and let $Q\in C^\infty(M;\C)$. For every nonempty open set $O\subset M$, there exists $W\in C^\infty_0(O;\R)$ such that $(-\Delta_g)^\alpha+Q+W:H^{2\alpha}(M)\to L^2(M)$ is invertible.
\end{lemma}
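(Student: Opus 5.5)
The plan is to reduce the dimension of $\ker P$, where $P=(-\Delta_g)^\alpha+Q$, by adding real perturbations supported in $O$, one step at a time. Since $P:H^{2\alpha}(M)\to L^2(M)$ is Fredholm of index zero (shown above), it suffices to reach a perturbation with trivial kernel.

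\emph{Step 1: regularity and unique continuation.} Elements of $\ker P$ are smooth. If $Pu=0$ with $u\in H^{2\alpha}(M)$, then $(-\Delta_g)^\alpha u=-Qu$, and bootstrapping with \eqref{eq:graph} and its higher-order analogues gives $u\in C^\infty(M)$. Moreover, no nonzero $u\in\ker P$ vanishes identically on $O$. Indeed, if $u|_O=0$, then $((-\Delta_g)^\alpha u)|_O=-(Qu)|_O=0$, and Proposition~\ref{prop:fracUC} gives $u=0$. Hence, for $0\neq u\in\ker P$, the smooth function $u^2$ is not identically zero on $O$. Taking real and imaginary parts, we can therefore choose a real $\chi\in C^\infty_0(O;\R)$ with $\int_M\chi u^2\,dV_g\neq0$.

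\emph{Step 2: symmetry and a Lyapunov--Schmidt reduction.} The operator $P$ is symmetric with respect to the complex bilinear pairing $\langle a,b\rangle=\int_M ab\,dV_g$. This holds because $(-\Delta_g)^\alpha$ has a real kernel and is symmetric, and $Q$ acts by multiplication. Consequently the range of $P$ is the bilinear annihilator of $\ker P$. Indeed, $\operatorname{Ran}P$ is closed of codimension $d=\dim\ker P$ and is annihilated by $\ker P^{T}=\ker P$.

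Let $u_1,\dots,u_d$ be a basis of $\ker P$. Set $P_t=P+t\chi$, where $\chi$ is real, and perform the standard reduction: write $u=v+w$ with $v\in\ker P$ and $w$ in a fixed complement $X$ of $\ker P$ in $H^{2\alpha}(M)$. Let $\Pi$ be a projection of $L^2(M)$ onto $\operatorname{Ran}P$ along the span of $\bar{\ }$-independent representatives pairing nondegenerately with the $u_i$. For small $|t|$, the equation $\Pi P_t u=0$ is solved uniquely by $w=-t\,R(t)v$ with $R(t)$ bounded. Then $\dim\ker P_t$ equals the dimension of the kernel of the $d\times d$ matrix
\[
B(t)_{ik}=\langle P_t(u_k+w_k(t)),u_i\rangle = t\int_M\chi u_iu_k\,dV_g+O(t^2).
\]

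\emph{Step 3: conclusion.} Choose $\chi$ from Step~1 for $u=u_1$. Then the matrix $A=(\int_M\chi u_iu_k\,dV_g)_{i,k=1}^d$ has $A_{11}\neq0$, so $A\neq0$. Hence $B(t)=t(A+O(t))$ has rank at least one for small $t\neq0$, and $\dim\ker(P+t\chi)<d$. We fix such a real $t$ and replace $Q$ by $Q+t\chi$. Iterating at most $d$ times and summing the perturbations yields $W\in C^\infty_0(O;\R)$ with $\ker\bigl((-\Delta_g)^\alpha+Q+W\bigr)=\{0\}$, hence invertibility by the index-zero property.

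The main obstacle is Step~2: justifying the reduction formula for $\dim\ker P_t$ for a non-self-adjoint $P$. I would handle it using the bilinear symmetry to identify the cokernel with $\ker P$. That identification is what makes the leading matrix $A$ equal to $\int\chi u_iu_k$ rather than a Hermitian pairing; this is also why a real perturbation suffices.
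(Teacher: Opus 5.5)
Your proof is correct. It reaches the conclusion by a different route from the paper's.

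\textbf{The paper's route.} The paper does not perturb step by step. It minimizes $\dim\ker\bigl((-\Delta_g)^\alpha+Q+W\bigr)$ over all $W\in C^\infty_0(O;\mathbb R)$ and takes a minimizer $P_0$ with kernel dimension $d>0$. It fixes one normalized $u\in\ker P_0$ and a real $\chi$ with $\int_M\chi u^2\,dV_g\neq0$, obtained from Proposition~\ref{prop:fracUC} and a phase rotation making $\operatorname{Re}(u^2)>0$ near a point of $O$. Minimality of $d$ then yields, for $\varepsilon_\nu\to0$, unit vectors $v_\nu\in\ker(P_0+\varepsilon_\nu\chi)$ that are Hermitian-orthogonal to $E=\ker P_0\ominus\mathbb C u$. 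Weak compactness in $H^{2\alpha}(M)$ and the compact embedding give $v_\nu\to cu$ in $L^2(M)$ with $|c|=1$. Bilinear symmetry gives $0=\int_M v_\nu P_0u\,dV_g=-\varepsilon_\nu\int_M\chi v_\nu u\,dV_g$. Dividing by $\varepsilon_\nu$ and passing to the limit gives $c\int_M\chi u^2\,dV_g=0$, a contradiction.

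\textbf{Comparison.} Both proofs rest on the same two ingredients. Unique continuation produces a real $\chi$ with $\int_M\chi u^2\,dV_g\neq0$. The complex bilinear (not Hermitian) symmetry makes this quantity the first-order obstruction. The paper extracts only the diagonal entry $A_{11}$ of your matrix, by a soft compactness argument, and the extremal choice of $W_0$ replaces your iteration.

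Your Lyapunov--Schmidt reduction needs more bookkeeping. You need a closed complement of $\ker P$ and a complement of $\operatorname{Ran}P$, for instance $\operatorname{span}\{\bar u_1,\dots,\bar u_d\}$; its pairing matrix with the $u_i$ is the Gram matrix. This nondegeneracy is also what shows the annihilator of $\ker P$ has codimension exactly $d$, so it equals $\operatorname{Ran}P$. You also need the Neumann-series solution $w=-tR(t)v$, and the cancellation $\langle PR(t)u_k,u_i\rangle=\langle R(t)u_k,Pu_i\rangle=0$ that underlies your expansion of $B(t)$. In return, your argument is constructive and gives a stronger statement. Lower semicontinuity of rank yields $\dim\ker(P+t\chi)\le d-\operatorname{rank}A$ for every sufficiently small real $t\neq0$, not merely the existence of some good perturbation.
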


\begin{proof}
All dimensions in this proof are taken over $\mathbb C$.
We argue by minimizing the kernel dimension over real-valued
perturbations supported in $O$. Let
\[
d=\min_{W\in C^\infty_0(O;\mathbb R)}
\dim\ker\bigl((-\Delta_g)^\alpha+Q+W\bigr).
\]
The minimum is attained, since the possible kernel dimensions
form a nonempty subset of the nonnegative integers.
Choose $W_0\in C^\infty_0(O;\mathbb R)$ attaining this minimum,
and set
\[
P_0=(-\Delta_g)^\alpha+Q+W_0.
\]
It suffices to show that $d=0$.

Suppose that $d>0$, and choose $u\in\ker P_0$ with
$\|u\|_{L^2(M)}=1$.
By elliptic regularity, $u$ is smooth.
Proposition~\ref{prop:fracUC}, together with the equation
$P_0u=0$, implies that $u(x_0)\ne0$ for some $x_0\in O$.
Multiplying $u$ by $\overline{u(x_0)}/|u(x_0)|$,
which has modulus one, we may assume that $u(x_0)$
is a positive real number. 
By continuity, $\operatorname{Re}(u^2)>0$ in an open
neighborhood $U\subset O$ of $x_0$.
Choose a nonzero nonnegative function
$\chi\in C^\infty_0(U;\mathbb R)$.
Then
\[
\operatorname{Re}\int_M\chi u^2\,dV_g
=
\int_M\chi\operatorname{Re}(u^2)\,dV_g
>0,
\]
and in particular,
\[
\int_M\chi u^2\,dV_g\ne0.
\]

Let
\[
E=\{w\in\ker P_0:(w,u)_{L^2(M)}=0\},
\]
where $(\cdot,\cdot)_{L^2(M)}$ denotes the usual Hermitian
inner product. Thus $\dim E=d-1$, and
$\ker P_0=\mathbb C u\oplus E$ is an orthogonal decomposition.

Choose a sequence $\varepsilon_\nu>0$ tending to zero.
Since $W_0+\varepsilon_\nu\chi\in C^\infty_0(O; \R)$, the minimality of $d$ gives
\[
\dim\ker(P_0+\varepsilon_\nu\chi)\geq d.
\]

Orthogonality to $E$ imposes at most $d-1$ linear conditions
on this kernel. We may therefore choose
\[
v_\nu\in\ker(P_0+\varepsilon_\nu\chi)\cap E^\perp,
\qquad
\|v_\nu\|_{L^2(M)}=1.
\]

The equation
\[
(-\Delta_g)^\alpha v_\nu
=-(Q+W_0+\varepsilon_\nu\chi)v_\nu
\]
and \eqref{eq:graph} give
\begin{align*}
\|v_\nu\|_{H^{2\alpha}(M)}
&\leq C\bigl(
\|(-\Delta_g)^\alpha v_\nu\|_{L^2(M)}
+\|v_\nu\|_{L^2(M)}\bigr)\\
&\leq C\bigl(
1+\|Q+W_0\|_{L^\infty(M)}
+\varepsilon_\nu\|\chi\|_{L^\infty(M)}\bigr)
\leq C',
\end{align*}
where $C'$ is independent of $\nu$, since
$\varepsilon_\nu\to0$.
Since $H^{2\alpha}(M)$ is a Hilbert space, the bounded
sequence $(v_\nu)$ has a weakly convergent subsequence
in $H^{2\alpha}(M)$.
By the compact embedding
$H^{2\alpha}(M)\hookrightarrow L^2(M)$,
we may pass to a further subsequence such that
\[
v_\nu\rightharpoonup v
\quad\text{in }H^{2\alpha}(M),
\qquad
v_\nu\longrightarrow v
\quad\text{in }L^2(M),
\]
for some $v\in H^{2\alpha}(M)$.
The strong convergence gives $\|v\|_{L^2(M)}=1$.
Moreover, since $P_0:H^{2\alpha}(M)\to L^2(M)$
is bounded, $P_0v_\nu\rightharpoonup P_0v$ in $L^2(M)$.
On the other hand,
\[
\|P_0v_\nu\|_{L^2(M)}
=
\varepsilon_\nu\|\chi v_\nu\|_{L^2(M)}
\leq
\varepsilon_\nu\|\chi\|_{L^\infty(M)}
\longrightarrow0.
\]
Uniqueness of the weak limit therefore gives $P_0v=0$.

Since $v_\nu\perp E$ and $v_\nu\to v$ in $L^2(M)$,
we have
$(v,w)_{L^2(M)}
=\lim_{\nu\to\infty}(v_\nu,w)_{L^2(M)}=0$
for every $w\in E$. Thus $v\perp E$.
Consequently,
\[
v\in\ker P_0\cap E^\perp=\mathbb C u,
\]
so $v=cu$ for some $c\in\mathbb C$ with $|c|=1$.

The operator $(-\Delta_g)^\alpha$ is self-adjoint and
commutes with complex conjugation, and hence is symmetric
for the complex bilinear pairing $\int_M fh\,dV_g$.
Multiplication by $Q+W_0$ is also symmetric for this pairing.
Using $P_0u=0$, we obtain
\[
0
=
\int_M v_\nu P_0u\,dV_g
=
\int_M(P_0v_\nu)u\,dV_g
=
-\varepsilon_\nu\int_M\chi v_\nu u\,dV_g.
\]
Dividing by $\varepsilon_\nu$ and using the strong
$L^2(M)$ convergence, together with $\chi u\in L^2(M)$,
we obtain
\[
0
=
\int_M\chi vu\,dV_g
=
c\int_M\chi u^2\,dV_g,
\]
contradicting $c\ne0$ and the choice of $\chi$.

Thus $d=0$, and $P_0$ is invertible because it is Fredholm
of index zero. Taking $W=W_0$ completes the proof.
\end{proof}

\section{Proof of Theorem \ref{thm_main}}
\label{sec_proof_main_theorem}

The proof of Theorem~\ref{thm_main} is divided into several
steps, each presented in a separate subsection.

Throughout this section, we regard each function in
$C^\infty_0(O)$ as a smooth function on either $M_j$ by
extension by zero, without changing its notation.
This extension is smooth even when $\partial O$ is irregular,
since the support is compactly contained in $O$.

\subsection{Removing the kernels of both fractional Schr\"odinger operators}

We first show that the kernels of both fractional
Schr\"odinger operators can be removed by a common
perturbation supported in $O$.

\begin{lemma}
\label{lem:commoninverse}
Under the assumptions of Theorem~\ref{thm_main}, there exists a common $W\in C^\infty_0(O;\R)$ such that, with
\begin{equation}
\label{eq:modified}
Q_j=V_j+W,\qquad P_j=(-\Delta_{g_j})^\alpha+Q_j,
\end{equation}
the operators $P_j:H^{2\alpha}(M_j)\to L^2(M_j)$ are invertible for $j=1,2$, and
\begin{equation}
\label{eq:dataequal_new}
\mathcal{C}^O_{M_1,g_1,Q_1}
=
\mathcal{C}^O_{M_2,g_2,Q_2}.
\end{equation}
\end{lemma}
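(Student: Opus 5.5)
First, for any $W\in C^\infty_0(O;\mathbb R)$ the Cauchy data sets are unchanged when $V_j$ is replaced by $V_j+W$. Indeed, the equation in \eqref{eq_data} is imposed only on $M_j\setminus\overline O$, where $W=0$. Hence \eqref{eq:dataequal_new} follows from \eqref{eq:dataequal} for every such $W$, and the whole task is to find one $W$ that makes both $P_1$ and $P_2$ invertible at once.

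Lemma~\ref{lem:removeone} applied to $M_1$ gives $W$ with $P_1$ invertible, but it says nothing about $P_2$. The plan is to show that equality of the Cauchy data forces $\dim\ker P_1=\dim\ker P_2$ for every common $W$. Then the $W$ from Lemma~\ref{lem:removeone} for $M_1$ also gives $\ker P_2=0$, and $P_2$ is invertible because it is Fredholm of index zero.

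To compare kernels, take $u\in\ker P_1$; it is smooth by elliptic regularity. Its pair $(u|_O,(-\Delta_{g_1})^\alpha u|_O)$ lies in $\mathcal C^O_{M_1,g_1,Q_1}$, since $P_1u=0$ on all of $M_1$. By \eqref{eq:dataequal_new} there is $\tilde u\in C^\infty(M_2)$ with the same pair on $O$ and with $P_2\tilde u=0$ on $M_2\setminus\overline O$.

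The difficulty is that $P_2\tilde u$ need not vanish on $O$. On $O$ we have
\[
P_2\tilde u=(-\Delta_{g_1})^\alpha u+Q_2u=(-\Delta_{g_1})^\alpha u+Q_1u=P_1u=0,
\]
using $Q_1|_O=Q_2|_O$, which follows from \eqref{eq:knownpotential}. Since $P_2\tilde u$ is smooth, hence continuous, and vanishes on $O$ and on $M_2\setminus\overline O$, it vanishes on the dense set $O\cup(M_2\setminus\overline O)$. Therefore $P_2\tilde u=0$ on $M_2$.

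The map $u\mapsto\tilde u$ is well defined and linear. If two choices of $\tilde u$ have the same Cauchy pair on $O$, their difference $w$ satisfies $w|_O=0$ and $((-\Delta_{g_2})^\alpha w)|_O=0$, so $w=0$ by Proposition~\ref{prop:fracUC}. The map is injective by Proposition~\ref{prop:fracUC} applied on $M_1$. Hence $\dim\ker P_1\le\dim\ker P_2$, and by symmetry the dimensions are equal.

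The step I expect to need the most care is the use of continuity when $\partial O$ is not regular, together with the smoothness of elements of $\ker P_j$. The latter follows from elliptic regularity for $(-\Delta_g)^\alpha+Q$, as already used in the proof of Lemma~\ref{lem:removeone}.
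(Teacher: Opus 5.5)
Your proof is correct and is essentially the paper's argument. The paper applies Lemma~\ref{lem:removeone} on $M_1$ and then proves injectivity of $P_2$ by sending $v\in\ker P_2$ through the Cauchy data to some $u$ with $P_1u=0$, using $Q_1|_O=Q_2|_O$ and density of $O\cup(M_1\setminus\overline O)$; invertibility of $P_1$ gives $u=0$, and Proposition~\ref{prop:fracUC} then gives $v=0$. That is exactly the direction $\ker P_2\to\ker P_1$ of your map that you invoke ``by symmetry'', and your equality of kernel dimensions for every common $W$, with its well-definedness check, is a slightly stronger repackaging that the paper does not need.
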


\begin{proof}
Apply Lemma~\ref{lem:removeone} on $(M_1,g_1)$ with potential $V_1$ and observation set $O$. This gives $W\in C^\infty_0(O;\R)$ such that $P_1=(-\Delta_{g_1})^\alpha+Q_1:H^{2\alpha}(M_1)\to L^2(M_1)$ is invertible, where $Q_1=V_1+W$. Regard $W$ as a function on $M_2$ by extension by zero, and set $Q_2=V_2+W$. Since adding $W$ does not change either Cauchy data set, \eqref{eq:dataequal_new} follows from \eqref{eq:dataequal} and \eqref{eq:invisible}.

Since $P_2:H^{2\alpha}(M_2)\to L^2(M_2)$ is Fredholm of index zero, it remains to prove its injectivity. Suppose that $v\in H^{2\alpha}(M_2)$ satisfies $P_2v=0$. By Seeley's theorem \cite{Seeley_1967}, $(-\Delta_{g_2})^\alpha$ is an elliptic classical pseudodifferential operator of order $2\alpha$. Since multiplication by $Q_2\in C^\infty(M_2)$ has order zero, $P_2$ is also elliptic of order $2\alpha$. Hence elliptic regularity gives $v\in C^\infty(M_2)$; see \cite[Chapter~7, Section~10]{Taylor_book_vol_II}.

By \eqref{eq:dataequal_new}, there exists $u\in C^\infty(M_1)$ satisfying $P_1u=0$ on $M_1\setminus\overline O$ and
\begin{equation}
\label{eq_2_1}
u|_O=v|_O,\qquad
\bigl((-\Delta_{g_1})^\alpha u\bigr)|_O
=
\bigl((-\Delta_{g_2})^\alpha v\bigr)|_O.
\end{equation}
By \eqref{eq:knownpotential}, we have $Q_1|_O=Q_2|_O$. Thus \eqref{eq_2_1} and the equation $P_2v=0$ imply that $P_1u=0$ on $O$ as well. Since $P_1u$ is smooth and $O\cup(M_1\setminus\overline O)$ is dense in $M_1$, it follows that $P_1u=0$ on all of $M_1$. The invertibility of $P_1$ therefore gives $u=0$.

Consequently, \eqref{eq_2_1} yields $v|_O=0$ and $\bigl((-\Delta_{g_2})^\alpha v\bigr)|_O=0$. Proposition~\ref{prop:fracUC} implies that $v=0$ on $M_2$. Hence $P_2$ is injective and therefore invertible.
\end{proof}

\subsection{From Cauchy data to local source-to-solution maps}

By Lemma~\ref{lem:commoninverse}, we may work with the operators $P_j=(-\Delta_{g_j})^\alpha+Q_j$, where $Q_j=V_j+W$, such that $P_j:H^{2\alpha}(M_j)\to L^2(M_j)$ is invertible for $j=1,2$ and \eqref{eq:dataequal_new} holds. Moreover, \eqref{eq:knownpotential} implies that $Q_1|_O=Q_2|_O$.

We denote the inverses by $P_j^{-1}:L^2(M_j)\to H^{2\alpha}(M_j)$. These inverses preserve smoothness. Indeed, if $f\in C^\infty(M_j)$, then $u=P_j^{-1}f\in H^{2\alpha}(M_j)$ satisfies $P_ju=f$. Since $P_j$ is elliptic of order $2\alpha$, elliptic regularity gives $u\in C^\infty(M_j)$; see \cite[Chapter~7, Section~10]{Taylor_book_vol_II}. We use the same notation for the restrictions $P_j^{-1}:C^\infty(M_j)\to C^\infty(M_j)$.

For sources $f\in C^\infty_0(O)$, extended by zero to $M_j$, the local source-to-solution map is $f\mapsto(P_j^{-1}f)|_O$. The following lemma identifies these maps from the Cauchy data.

\begin{lemma}
\label{lem:localinverse}
Suppose that $P_j:H^{2\alpha}(M_j)\to L^2(M_j)$ is invertible for $j=1,2$, that $Q_1|_O=Q_2|_O$, and that \eqref{eq:dataequal_new} holds. Then, for every $f\in C^\infty_0(O)$,
\begin{align}
(P_1^{-1}f)|_O
&=(P_2^{-1}f)|_O,
\label{eq:localinverse}\\
\bigl((-\Delta_{g_1})^\alpha P_1^{-1}f\bigr)|_O
&=\bigl((-\Delta_{g_2})^\alpha P_2^{-1}f\bigr)|_O.
\label{eq:localfractional}
\end{align}
\end{lemma}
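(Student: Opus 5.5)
Fix $f\in C^\infty_0(O)$, extended by zero to both manifolds, and set $u_1=P_1^{-1}f\in C^\infty(M_1)$; smoothness holds by the elliptic regularity remark preceding the lemma. Since $\operatorname{supp}f\subset O$, the function $u_1$ satisfies $P_1u_1=0$ on $M_1\setminus\overline O$. Its pair $\bigl(u_1|_O,((-\Delta_{g_1})^\alpha u_1)|_O\bigr)$ therefore belongs to $\mathcal C^O_{M_1,g_1,Q_1}$. By \eqref{eq:dataequal_new}, there exists $v\in C^\infty(M_2)$ with $P_2v=0$ on $M_2\setminus\overline O$ and
\[
v|_O=u_1|_O,\qquad \bigl((-\Delta_{g_2})^\alpha v\bigr)|_O=\bigl((-\Delta_{g_1})^\alpha u_1\bigr)|_O .
\]

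Next compute $P_2v$ on $O$. Since $Q_1|_O=Q_2|_O$, we get
\[
(P_2v)|_O=\bigl((-\Delta_{g_1})^\alpha u_1\bigr)|_O+Q_1u_1|_O=(P_1u_1)|_O=f|_O .
\]
On $M_2\setminus\overline O$ we have $P_2v=0=f$. Thus the smooth function $P_2v-f$ vanishes on the dense open set $O\cup(M_2\setminus\overline O)$, and by continuity $P_2v=f$ on all of $M_2$. Invertibility of $P_2$ then gives $v=P_2^{-1}f$. Substituting into the two matching identities yields \eqref{eq:localinverse} and \eqref{eq:localfractional}.

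The argument is essentially the one already used in the proof of Lemma~\ref{lem:commoninverse}, so I expect no serious obstacle. The only points requiring care are the density argument to conclude $P_2v=f$ globally, which is needed because $\partial O$ may be irregular, and the use of invertibility on $H^{2\alpha}(M_2)$ to identify $v$ with $P_2^{-1}f$. Since $v$ is smooth, it lies in $H^{2\alpha}(M_2)$, so uniqueness in that space applies.
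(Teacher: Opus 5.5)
Your proposal is correct and follows essentially the same argument as the paper's proof. You use the data equality to produce a partner solution on $M_2$, match $P_2v$ with $f$ on $O$ via $Q_1|_O=Q_2|_O$, conclude $P_2v=f$ globally by density of $O\cup(M_2\setminus\overline O)$, and identify $v=P_2^{-1}f$ by invertibility.
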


\begin{proof}
Let $f\in C^\infty_0(O)$ and set $u_1=P_1^{-1}f\in C^\infty(M_1)$. Since $P_1u_1=f$ on $M_1$ and $f$ is supported in $O$, we have $P_1u_1=0$ on $M_1\setminus\overline O$. By \eqref{eq:dataequal_new}, there exists $u_2\in C^\infty(M_2)$ satisfying
\begin{equation}
\label{eq_2_2}
P_2u_2=0\quad\text{on }M_2\setminus\overline O
\end{equation}
and
\begin{equation}
\label{eq_2_3}
u_1|_O=u_2|_O,\qquad
\bigl((-\Delta_{g_1})^\alpha u_1\bigr)|_O
=
\bigl((-\Delta_{g_2})^\alpha u_2\bigr)|_O.
\end{equation}
Since $Q_1|_O=Q_2|_O$, these identities imply that $P_2u_2=P_1u_1=f$ on $O$. On $M_2\setminus\overline O$, both $P_2u_2$ and $f$ vanish. Thus the smooth function $P_2u_2-f$ vanishes on the dense set $O\cup(M_2\setminus\overline O)$, and consequently $P_2u_2=f$ on all of $M_2$.

The invertibility of $P_2$ now gives $u_2=P_2^{-1}f$. Substituting this and $u_1=P_1^{-1}f$ into \eqref{eq_2_3} proves \eqref{eq:localinverse} and \eqref{eq:localfractional}.
\end{proof}

\subsection{From local source-to-solution maps to heat and resolvent identities}

Our next goal is to recover the local action of the resolvent of the Laplacian. We begin by showing that the local source-to-solution map determines the metric on the
observation set. For the fractional Laplacian without a potential, it was shown in
\cite[Corollary~1.1]{Cheng_Ruland} that a density-weighted local source-to-solution map determines the metric on the
observation set. We give a direct pseudodifferential proof
in the present setting of fractional Schr\"odinger operators
with potentials.

\begin{lemma}
\label{prop:localmetric}
If \eqref{eq:localinverse} holds for every
$f\in C^\infty_0(O)$, then $g_1|_O=g_2|_O$.
\end{lemma}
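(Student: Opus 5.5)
Our plan is to read off the metric from the principal symbol of the local source-to-solution map. Fix a coordinate chart $\Omega\Subset O$ (common to both manifolds, since $O$ is a common open set, with $g_1,g_2$ two smooth metrics on it). Consider the operator $R_j:C^\infty_0(\Omega)\to C^\infty(\Omega)$, $R_jf=(P_j^{-1}f)|_\Omega$. Since $f$ is extended by zero and $P_j^{-1}$ acts on $L^2(M_j)=L^2(M_j,dV_{g_j})$, the operator $R_j$ involves no integration against the volume form: it is simply the restriction to $\Omega\times\Omega$ of the inverse operator, viewed as a map on functions. By hypothesis \eqref{eq:localinverse}, $R_1=R_2$ as operators $C^\infty_0(\Omega)\to C^\infty(\Omega)$, so their Schwartz kernels (in the coordinate Lebesgue measure) coincide on $\Omega\times\Omega$.

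Next, identify $R_j$ as a pseudodifferential operator. By Seeley's theorem \cite{Seeley_1967}, $(-\Delta_{g_j})^\alpha$ is a classical elliptic pseudodifferential operator of order $2\alpha$ with principal symbol $|\xi|_{g_j(x)}^{2\alpha}=(g_j^{kl}(x)\xi_k\xi_l)^\alpha$. Adding $Q_j$ (order zero) leaves the principal symbol unchanged. Since $P_j$ is elliptic and invertible, a standard parametrix argument shows that $P_j^{-1}$ is a classical pseudodifferential operator of order $-2\alpha$: if $B_j$ is a parametrix with $B_jP_j=I+K_j$, $K_j$ smoothing, then $P_j^{-1}=B_j-K_jP_j^{-1}$, and $K_jP_j^{-1}$ is smoothing because $P_j^{-1}$ maps $H^s\to H^{s+2\alpha}$ for all $s$ (by elliptic regularity). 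Hence in local coordinates $R_j\in\Psi^{-2\alpha}_{\mathrm{cl}}(\Omega)$ with principal symbol $(g_j^{kl}(x)\xi_k\xi_l)^{-\alpha}$. Note that the principal symbol is invariant under changes in the choice of density, as multiplication by a positive smooth function only alters lower order terms; in any case, as observed above, no density enters here.

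Finally, equality of the operators $R_1=R_2$ on $\Omega$ implies equality of their full symbols modulo $S^{-\infty}$. In particular, the principal symbols coincide: $(g_1^{kl}(x)\xi_k\xi_l)^{-\alpha}=(g_2^{kl}(x)\xi_k\xi_l)^{-\alpha}$ for $x\in\Omega$ and $\xi\neq0$. Since $\alpha>0$, raising to the power $-1/\alpha$ gives equality of the quadratic forms $g_1^{kl}(x)\xi_k\xi_l=g_2^{kl}(x)\xi_k\xi_l$. By polarization, $g_1^{-1}=g_2^{-1}$ at $x$, hence $g_1=g_2$ on $\Omega$. Covering $O$ by such charts yields $g_1|_O=g_2|_O$.

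The only step requiring care is the second: justifying that the global inverse $P_j^{-1}$, restricted to $\Omega$, is a classical pseudodifferential operator with the stated principal symbol. Once this is in place, uniqueness of the principal symbol of an operator determined by its action on $C^\infty_0(\Omega)$ is standard. For instance, one can recover the symbol by testing against oscillatory functions $e^{i\lambda x\cdot\xi}\psi$ as $\lambda\to\infty$, which gives $\lambda^{2\alpha}e^{-i\lambda x\cdot\xi}R_j(e^{i\lambda\,\cdot\,\xi}\psi)(x)\to\sigma_j(x,\xi)\psi(x)$.
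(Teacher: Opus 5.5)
Your proposal is correct and takes essentially the paper's route: both read off $g_j$ on $O$ from the principal symbol $|\xi|_{g_j}^{-2\alpha}$ of the localized inverse $\chi P_j^{-1}\chi$. The only difference is that the paper never shows $P_j^{-1}$ is classical; it compares $P_j^{-1}$ with a classical parametrix $B_j$ of $(-\Delta_{g_j})^\alpha$, uses $P_j^{-1}-B_j\in\Psi^{-4\alpha}$, and concludes from the vanishing of the order $-2\alpha$ principal symbol of $\chi B_1\chi-\chi B_2\chi$, whereas you prove classicality directly via a parametrix of $P_j$. (One unused aside is inaccurate: multiplying an operator by a positive function rescales its principal symbol, and only conjugation leaves it unchanged; since no density enters here, this does not affect your argument.)
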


\begin{proof}
We write $\Psi^m$ for the pseudodifferential class of
type $(1,0)$ and $\Psi_{\mathrm{cl}}^m$ for its classical
subclass. By Seeley's theorem \cite{Seeley_1967},
$(-\Delta_{g_j})^\alpha$ is a classical elliptic
pseudodifferential operator of order $2\alpha$, with
homogeneous principal symbol
\[
|\xi|_{g_j}^{2\alpha}
=
\left(
\sum_{k,l=1}^n g_j^{kl}(x)\xi_k\xi_l
\right)^\alpha,
\qquad
(x,\xi)\in T^*M_j,\quad \xi\ne0,
\]
where $(g_j^{kl})$ is the inverse of the matrix of the metric
in local coordinates. Choose a classical parametrix
$B_j\in\Psi_{\mathrm{cl}}^{-2\alpha}(M_j)$, with homogeneous
principal symbol $|\xi|_{g_j}^{-2\alpha}$, such that
\begin{equation}
\label{eq_9_1}
(-\Delta_{g_j})^\alpha B_j=I+R_j,
\qquad R_j\in\Psi^{-\infty}(M_j);
\end{equation}
see \cite[Chapter~I, Sections~5.4--5.5]{Shubin_2021}.

Since $P_j$ is elliptic of order $2\alpha$ and invertible,
the inverse operator theorem
\cite[Chapter~I, Theorem~8.2]{Shubin_2021} gives
$P_j^{-1}\in\Psi^{-2\alpha}(M_j)$.
Using \eqref{eq_9_1} and the fact that multiplication by
$Q_j$ has order zero, we obtain
\begin{equation}
\label{eq_9_2}
P_j^{-1}-B_j
=
-P_j^{-1}Q_jB_j-P_j^{-1}R_j
\in\Psi^{-4\alpha}(M_j),
\qquad j=1,2.
\end{equation}

Fix $x_0\in O$, choose a coordinate neighborhood
$U\Subset O$ of $x_0$, and take
$\chi\in C^\infty_0(U;\mathbb R)$ with $\chi=1$ near $x_0$.
By \eqref{eq:localinverse},
\begin{equation}
\label{eq_9_3}
\chi P_1^{-1}\chi=\chi P_2^{-1}\chi
\end{equation}
as operators on scalar functions in these common coordinates.
Combining \eqref{eq_9_2} and \eqref{eq_9_3}, we obtain
\[
\chi B_1\chi-\chi B_2\chi\in\Psi^{-4\alpha}(U).
\]
On the other hand, the classical composition calculus
\cite[Chapter~I, Theorem~3.4 and Proposition~3.7]{Shubin_2021}
shows that this difference belongs to
$\Psi_{\mathrm{cl}}^{-2\alpha}(U)$, with homogeneous
principal symbol
\[
\chi(x)^2
\bigl(
|\xi|_{g_1}^{-2\alpha}
-
|\xi|_{g_2}^{-2\alpha}
\bigr).
\]
Since $-4\alpha<-2\alpha$, this homogeneous symbol must
vanish. Evaluating at $x_0$, where $\chi=1$, gives
\[
|\xi|_{g_1(x_0)}^{-2\alpha}
=
|\xi|_{g_2(x_0)}^{-2\alpha},
\qquad
\xi\in T_{x_0}^*O\setminus\{0\}.
\]
As $\alpha>0$, the corresponding positive quadratic forms
are equal, and hence $g_1(x_0)=g_2(x_0)$.
Since $x_0\in O$ was arbitrary, we conclude that
$g_1|_O=g_2|_O$.
\end{proof}

We now have $g_1=g_2$ and $Q_1=Q_2$ on $O$, together with
\eqref{eq:localinverse}--\eqref{eq:localfractional}.
The following lemma converts these identities into equalities for the heat
semigroup and the resolvent of the Laplacian applied to
the solutions $P_j^{-1}f$.

\begin{lemma}
\label{lem:mixed}
Assume that $g_1|_O=g_2|_O$ and that
\eqref{eq:localinverse}--\eqref{eq:localfractional}
hold for every $f\in C^\infty_0(O)$.
Then, for every $f\in C^\infty_0(O)$, $t>0$, and $z>0$,
\begin{align}
(e^{t\Delta_{g_1}}P_1^{-1}f)|_O
&=(e^{t\Delta_{g_2}}P_2^{-1}f)|_O,
\label{eq:mixedheat}\\
\bigl((-\Delta_{g_1}+z)^{-1}P_1^{-1}f\bigr)|_O
&=\bigl((-\Delta_{g_2}+z)^{-1}P_2^{-1}f\bigr)|_O.
\label{eq:mixedresolvent}
\end{align}
\end{lemma}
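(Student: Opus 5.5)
The plan is to apply Proposition~\ref{prop:paired} with $a_j=P_j^{-1}f$, and then take a Laplace transform in time.

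Fix $f\in C^\infty_0(O)$ and set $a_j=P_j^{-1}f\in C^\infty(M_j)$; smoothness holds by the elliptic regularity noted before Lemma~\ref{lem:localinverse}. The identities \eqref{eq:localinverse} and \eqref{eq:localfractional} say exactly that $a_1|_O=a_2|_O$ and $((-\Delta_{g_1})^\alpha a_1)|_O=((-\Delta_{g_2})^\alpha a_2)|_O$. This is hypothesis \eqref{eq:pair}. Since $g_1|_O=g_2|_O$ by assumption, Proposition~\ref{prop:paired} applies directly and yields \eqref{eq:mixedheat} for every $t>0$.

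For \eqref{eq:mixedresolvent}, fix $z>0$ and use the heat representation \eqref{eq:laplaceresolvent} of the resolvent with $u=a_j$. Because $a_j$ is smooth, Section~\ref{subsec_functional_calculus} shows that this integral converges as a $C(M_j)$-valued Bochner integral. Hence point evaluation at any $x\in O$ passes under the integral, giving
\[
\bigl((-\Delta_{g_j}+z)^{-1}a_j\bigr)(x)=\int_0^\infty e^{-zt}\,(e^{t\Delta_{g_j}}a_j)(x)\,dt,\qquad x\in O.
\]
By \eqref{eq:mixedheat}, the integrands for $j=1,2$ agree for all $t>0$ and $x\in O$, so the integrals coincide. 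This gives \eqref{eq:mixedresolvent}. The integrals are absolutely convergent: the integrand is continuous in $t$ and bounded by $e^{-zt}\|a_j\|_{L^\infty}$ thanks to \eqref{eq:heatLinfty}.

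The main step is the first one, and it has already been done in Proposition~\ref{prop:paired}. The only point to check is that $a_j$ are genuinely smooth on all of $M_j$, which that proposition requires; the paper established this for $P_j^{-1}$ on smooth sources. The Laplace transform step is routine once the pointwise justification of \eqref{eq:laplaceresolvent} is invoked.
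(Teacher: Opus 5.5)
Your proposal is correct and follows the paper's proof essentially verbatim. Both arguments apply Proposition~\ref{prop:paired} to $a_j=P_j^{-1}f$ to obtain \eqref{eq:mixedheat}, then pass to \eqref{eq:mixedresolvent} through the pointwise heat-semigroup representation \eqref{eq:laplaceresolvent}, which is justified by the $C(M_j)$-valued Bochner convergence for smooth data.
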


\begin{proof}
Fix $f\in C^\infty_0(O)$ and set $u_j=P_j^{-1}f\in C^\infty(M_j)$, $j=1,2$. By~\eqref{eq:localinverse}--\eqref{eq:localfractional}, the functions $u_1$ and $u_2$
have matching fractional Cauchy pairs on $O$. Since $g_1=g_2$ on $O$, Proposition~\ref{prop:paired} gives \eqref{eq:mixedheat}.

To prove \eqref{eq:mixedresolvent}, fix $z>0$.
Using \eqref{eq:laplaceresolvent} pointwise, as justified
in Subsection~\ref{subsec_functional_calculus}, we obtain,
for every $x\in O$,
\begin{align*}
\bigl((-\Delta_{g_1}+z)^{-1}u_1\bigr)(x)
&=\int_0^\infty e^{-zt}(e^{t\Delta_{g_1}}u_1)(x)\,dt=\int_0^\infty e^{-zt}(e^{t\Delta_{g_2}}u_2)(x)\,dt\\
&=\bigl((-\Delta_{g_2}+z)^{-1}u_2\bigr)(x),
\end{align*}
where the middle equality follows from \eqref{eq:mixedheat}.
This proves \eqref{eq:mixedresolvent}.
\end{proof}

By Lemma~\ref{lem:mixed}, we know the local action of
$(-\Delta_{g_j}+z)^{-1}P_j^{-1}$.
The next lemma recovers the product in the opposite
order by applying bilinear symmetry twice.
This identity will be used in the next step to recover
the local resolvent of the Laplacian.

\begin{lemma}
\label{lem:reversed}
Assume that $g_1|_O=g_2|_O$ and that
\eqref{eq:mixedresolvent} holds for every $z>0$ and
$f\in C^\infty_0(O)$.
Then, for every $z>0$ and $f\in C^\infty_0(O)$,
\begin{equation}
\label{eq:reversed}
\bigl(P_1^{-1}(-\Delta_{g_1}+z)^{-1}f\bigr)|_O
=
\bigl(P_2^{-1}(-\Delta_{g_2}+z)^{-1}f\bigr)|_O.
\end{equation}
\end{lemma}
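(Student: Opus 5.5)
The plan is to pair \eqref{eq:mixedresolvent} against a second test function in $C^\infty_0(O)$ and move the operators across using bilinear symmetry. The key facts are:
\begin{itemize}
\item[(i)] $P_j^{-1}$ is symmetric for the complex bilinear pairing $\langle a,b\rangle_j=\int_{M_j}ab\,dV_{g_j}$;
\item[(ii)] $(-\Delta_{g_j}+z)^{-1}$ is symmetric for the same pairing;
\item[(iii)] since $g_1=g_2$ on $O$, the volume forms $dV_{g_1}$ and $dV_{g_2}$ agree on $O$.
\end{itemize}
For (ii), the resolvent is self-adjoint for real $z>0$ and commutes with complex conjugation, so bilinear symmetry follows exactly as for $(-\Delta_g)^\alpha$ in the proof of Lemma~\ref{lem:removeone}. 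For (i), $P_j=(-\Delta_{g_j})^\alpha+Q_j$ satisfies $\langle P_ja,b\rangle_j=\langle a,P_jb\rangle_j$ for $a,b\in H^{2\alpha}(M_j)$, since both summands are bilinearly symmetric; this holds even though $Q_j$ is complex-valued. Setting $a=P_j^{-1}F$ and $b=P_j^{-1}G$ then gives $\langle F,P_j^{-1}G\rangle_j=\langle P_j^{-1}F,G\rangle_j$ for $F,G\in L^2(M_j)$.

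Now fix $f,h\in C^\infty_0(O)$ and $z>0$, and compute
\[
\int_O h\,P_j^{-1}(-\Delta_{g_j}+z)^{-1}f\,dV_{g_j}
=\big\langle h,P_j^{-1}(-\Delta_{g_j}+z)^{-1}f\big\rangle_j
=\big\langle (-\Delta_{g_j}+z)^{-1}P_j^{-1}h,\,f\big\rangle_j .
\]
Here we apply (i) first and (ii) second. Since $f$ is supported in $O$, the last expression equals $\int_O f\,\bigl((-\Delta_{g_j}+z)^{-1}P_j^{-1}h\bigr)\,dV_{g_j}$. By \eqref{eq:mixedresolvent} applied to $h$, the integrand is the same for $j=1,2$ on $O$. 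By (iii), the measures also coincide there. Hence
\[
\int_O h\,\bigl(P_1^{-1}(-\Delta_{g_1}+z)^{-1}f-P_2^{-1}(-\Delta_{g_2}+z)^{-1}f\bigr)\,dV_g=0 .
\]
This holds for all $h\in C^\infty_0(O)$. Both functions are smooth, because the resolvent and $P_j^{-1}$ preserve smoothness, so the difference vanishes on $O$, which is \eqref{eq:reversed}.

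The argument is short, and I expect the main point needing care to be the justification of the bilinear symmetry of $P_j^{-1}$ for complex $Q_j$. One cannot use Hermitian adjoints here, since $P_j^*=(-\Delta_{g_j})^\alpha+\overline{Q_j}$ differs from $P_j$. The bilinear transpose of $P_j$, however, is $P_j$ itself, and this is exactly what the computation above requires. The remaining steps are routine: the function $h$ extends by zero to both manifolds, so every pairing is well defined, and the requirement that the volume forms agree on $O$ follows directly from $g_1|_O=g_2|_O$.
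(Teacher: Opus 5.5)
Your proposal is correct and follows essentially the same route as the paper. Both arguments pair with a second test function in $C^\infty_0(O)$, transpose $P_j^{-1}$ and $(-\Delta_{g_j}+z)^{-1}$ using their symmetry for the complex bilinear pairing, and then invoke \eqref{eq:mixedresolvent}, the equality of the volume forms on $O$, and smoothness to conclude. The only cosmetic difference is in how the bilinear symmetries are justified: you use self-adjointness and commutation with complex conjugation on $H^{2\alpha}$ and $L^2$, while the paper works directly with smooth functions and integration by parts.
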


\begin{proof}
Throughout this proof, we use the complex bilinear pairing
$(a,b)\mapsto\int_{M_j}ab\,dV_{g_j}$, without complex
conjugation.

As noted in the proof of Lemma~\ref{lem:removeone},
$P_j$ is symmetric with respect to this bilinear pairing.
Since $P_j^{-1}$ maps smooth functions to smooth functions,
this symmetry gives, for all $a,b\in C^\infty(M_j)$,
\begin{align}
\int_{M_j}(P_j^{-1}a)b\,dV_{g_j}
&=\int_{M_j}(P_j^{-1}a)P_j(P_j^{-1}b)\,dV_{g_j} =\int_{M_j}P_j(P_j^{-1}a)(P_j^{-1}b)\,dV_{g_j}
\notag\\
&=\int_{M_j}a(P_j^{-1}b)\,dV_{g_j}.
\label{eq:inversesymmetry}
\end{align}
Similarly, integration by parts for $-\Delta_{g_j}+z$
and smoothness of its inverse on smooth functions yield
\begin{equation}
\label{eq:resolventsymmetry}
\int_{M_j}\bigl((-\Delta_{g_j}+z)^{-1}a\bigr)b\,dV_{g_j}
=
\int_{M_j}a\bigl((-\Delta_{g_j}+z)^{-1}b\bigr)\,dV_{g_j},
\end{equation}
for all $a,b\in C^\infty(M_j)$ and $z>0$.

Fix $z>0$ and $f,q\in C^\infty_0(O)$, and write
$g=g_1|_O=g_2|_O$.
Using \eqref{eq:resolventsymmetry} and then
\eqref{eq:inversesymmetry}, we obtain
\begin{align}
\int_O f\,(-\Delta_{g_j}&+z)^{-1}P_j^{-1}q\,dV_g
=\int_{M_j}f\,(-\Delta_{g_j}+z)^{-1}P_j^{-1}q\,dV_{g_j}
\notag\\
&=\int_{M_j}\bigl((-\Delta_{g_j}+z)^{-1}f\bigr)
                 (P_j^{-1}q)\,dV_{g_j}
=\int_{M_j}q\,P_j^{-1}(-\Delta_{g_j}+z)^{-1}f\,dV_{g_j}
\notag\\
&=\int_O q\,P_j^{-1}(-\Delta_{g_j}+z)^{-1}f\,dV_g.
\label{eq:twosymmetries}
\end{align}
The first and last equalities use the supports of $f$ and $q$,
respectively, together with the equality of the volume
measures on $O$.

By \eqref{eq:mixedresolvent}, applied to the source $q$,
the first integral in \eqref{eq:twosymmetries} is the same
for $j=1$ and $j=2$.
Hence, for every $q\in C^\infty_0(O)$,
\[
\int_O q\Bigl(
P_1^{-1}(-\Delta_{g_1}+z)^{-1}f
-
P_2^{-1}(-\Delta_{g_2}+z)^{-1}f
\Bigr)\,dV_g=0.
\]
The expression in parentheses is smooth on $O$.
Since its pairing with every test function vanishes,
it vanishes pointwise on $O$.
This proves \eqref{eq:reversed}.
\end{proof}

We next eliminate the potential and obtain equality of the local
resolvents of the Laplacians.

\begin{lemma}
\label{lem:bare}
Assume that $g_1|_O=g_2|_O$, $Q_1|_O=Q_2|_O$, and that
\eqref{eq:reversed} holds for every $z>0$ and
$f\in C^\infty_0(O)$.
Then, for every $z>0$ and $f\in C^\infty_0(O)$,
\begin{equation}
\label{eq:bare}
\bigl((-\Delta_{g_1}+z)^{-1}f\bigr)|_O
=
\bigl((-\Delta_{g_2}+z)^{-1}f\bigr)|_O.
\end{equation}
\end{lemma}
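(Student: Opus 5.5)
The plan is to reduce \eqref{eq:bare} to equality of a second fractional Cauchy component, and then to rerun the moment argument of Proposition~\ref{prop:paired} with a nonzero right-hand side.

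\textbf{Setup.} Fix $z>0$ and $f\in C^\infty_0(O)$, and set
\[
v_j=(-\Delta_{g_j}+z)^{-1}f,\qquad w_j=P_j^{-1}v_j,\qquad j=1,2.
\]
Both are smooth. By \eqref{eq:reversed}, $w_1|_O=w_2|_O$. Since $P_jw_j=v_j$ and $Q_1|_O=Q_2|_O$, we have on $O$
\[
e:=v_1-v_2=(-\Delta_{g_1})^\alpha w_1-(-\Delta_{g_2})^\alpha w_2 .
\]
Thus \eqref{eq:bare} is equivalent to $e=0$ on $O$. By locality, $(-\Delta_g+z)v_j=f$ on $O$ gives $(-\Delta_g+z)e=0$ on $O$, so $\Delta_g^m e=z^me$ there for every $m$.

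\textbf{Moment identities with a source.} Set $h(t,x)=(e^{t\Delta_{g_1}}w_1)(x)-(e^{t\Delta_{g_2}}w_2)(x)$ for $x\in O$. I will repeat the proof of Proposition~\ref{prop:paired} verbatim, using only the first identity $w_1|_O=w_2|_O$.
\begin{itemize}
\item The wave/finite-propagation argument needs only this first identity. It gives the small-time bound \eqref{eq:smalltime} for $\partial_t^mh$, locally uniformly on $O$, together with the bound \eqref{eq:pairedlarge}.
\item Applying $\Delta_g^m$ to the relation for $e$ and using \eqref{frac_laplace_heat} and \eqref{eq_7_2_1}, we get $\int_0^\infty\partial_t^mh\,t^{-1-\alpha}\,dt=\Gamma(-\alpha)z^me(x)$ for $m=0,1,2,\ldots$.
\item Integrating by parts $m$ times, with no boundary terms, gives
\[
\int_0^\infty h(t,x)\,t^{-1-\alpha-m}\,dt=\Gamma(-\alpha)\,\frac{z^m}{(1+\alpha)_m}\,e(x),
\]
where $(1+\alpha)_m$ is the Pochhammer symbol coming from differentiating $t^{-1-\alpha-k}$.
\end{itemize}

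\textbf{Forcing $e=0$.} With $F(r)=r^{\alpha-1}h(1/r,x)$, the moments of $F$ are $\int_0^\infty r^mF(r)\,dr=c_m z^m e(x)$ with $c_m=\Gamma(-\alpha)/(1+\alpha)_m$. Since $|F(r)|\leq Ce^{-cr}r^{\alpha-1}$, its Fourier transform $\widehat F$ is holomorphic in $\{\operatorname{Im}\xi<c\}$. Its Taylor series at $0$ is therefore
\[
e(x)\,\Gamma(-\alpha)\sum_{m\geq0}\frac{(-i\xi z)^m}{m!\,(1+\alpha)_m},
\]
a confluent-hypergeometric / Bessel-type entire function. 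The function $G$ on $(0,\infty)$ whose Fourier--Laplace transform is this entire function is, up to constants, $r^{\alpha}$ times a Bessel function of argument $2\sqrt{zr}$; it grows like $e^{2\sqrt{zr}}$ and certainly does not decay like $e^{-cr}$. By uniqueness of analytic continuation and of the Fourier transform, $F=e(x)G$. The exponential decay of $F$ then forces $e(x)=0$. As $x\in O$ was arbitrary, \eqref{eq:bare} follows.

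\textbf{Fallback.} If the identification of $G$ is awkward, I would instead use the ODE route. Laplace inversion in $z$ of \eqref{eq:reversed} gives $y_j(t)=P_j^{-1}e^{t\Delta_{g_j}}f$, with $y_1=y_2$ on $O$ for all $t>0$. One then studies $D(t)=e^{t\Delta_{g_1}}f-e^{t\Delta_{g_2}}f$ on $O$, which solves the heat equation there, vanishes to infinite order at $t=0$, and equals the difference of $(-\Delta_{g_j})^\alpha y_j$ on $O$.

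\textbf{Main obstacle.} The hard step is the final one: extracting $e=0$ from the moment identities with the explicit nonzero right-hand side. This requires checking that the only function with exponential decay and these moments is zero, i.e.\ that the analytic function determined by the moments has no rapidly decaying inverse transform unless $e(x)=0$.
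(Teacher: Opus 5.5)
\textbf{Gap in the final step.} Your reduction is sound and differs from the paper's route. Up to the moment identities $\int_0^\infty r^mF(r)\,dr=\Gamma(-\alpha)z^m e(x)/(1+\alpha)_m$, everything checks out: the finite-propagation bound uses only $w_1|_O=w_2|_O$, and $\Delta_g^m e=z^m e$ on $O$ holds. The gap is the last step, which you flag yourself, and the argument you give there is not valid.

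No function $G$ on $(0,\infty)$ of the kind you describe has Fourier--Laplace transform $\Phi(-iz\xi)$, where $\Phi={}_0F_1(;1+\alpha;\cdot\,)$. A positive Bessel-type function growing like $e^{2\sqrt{zr}}$ has divergent moments. Moreover, if $Ge^{-\sigma r}\in L^1$, its transform is bounded on the line $\operatorname{Im}\xi=-\sigma$, and $\Phi(-iz\xi)$ is not. So ``$F=e(x)G$ by uniqueness of the Fourier transform'' has no meaning. Formally, the inverse transform of $\Phi(-iz\xi)$ is the infinite-order sum $\sum_m\frac{(-z)^m}{m!\,(1+\alpha)_m}\delta^{(m)}$ at the origin, not a function. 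The growth of $\Phi(-iz\xi)$ also depends on direction: along $\xi=-i\eta$, $\eta\to+\infty$, it decays like a $J_\alpha$-function. So the real Laplace transform gives no contradiction, and you must argue on the correct line.

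\textbf{How to close it.} By the identity theorem, $\widehat F(\xi)=e(x)\Gamma(-\alpha)\Phi(-iz\xi)$ on the connected half-plane $\{\operatorname{Im}\xi<c\}$, which contains $\mathbb R$. There $|\widehat F(\xi)|\leq\|F\|_{L^1}$. But $\Phi$ is a nonconstant entire function of order $1/2$. By Phragm\'en--Lindel\"of in the two half-planes bounded by the imaginary axis, it cannot be bounded on that axis; explicitly, $|\Phi(is)|$ grows like $|s|^{-\alpha/2-1/4}e^{\sqrt{2|s|}}$. Hence $e(x)=0$ and your proof closes. The fallback, which as sketched reaches no conclusion, is then unnecessary.

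\textbf{Comparison with the paper.} Your route needs this extra complex-analytic input because you keep $w_j$, whose fractional Cauchy pairs do not yet match, and so face inhomogeneous moments. The paper instead passes to $(-\Delta_{g_j}+z)w_j$ (not your $v_j$). Its pairs match on $O$ by the local identity $(-\Delta_{g_j})^\alpha(-\Delta_{g_j}+z)w_j=f-(-\Delta_{g_j}+z)(Q_jw_j)$, so Proposition~\ref{prop:paired} applies with vanishing moments. The first-order ODE $(-\partial_t+z)d=0$, $d(0,x)=0$, then transfers the heat identity to $w_j$. Finally \eqref{frac_laplace_heat} gives $(P_1w_1)|_O=(P_2w_2)|_O$, with no special functions needed.
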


\begin{proof}
Fix $z>0$ and $f\in C^\infty_0(O)$, and set
\begin{equation}
\label{eq:wv}
w_j=P_j^{-1}(-\Delta_{g_j}+z)^{-1}f,
\qquad
v_j=(-\Delta_{g_j}+z)w_j.
\end{equation}
All these functions are smooth. By \eqref{eq:reversed},
\begin{equation}
\label{eq_traces_w}
w_1|_O=w_2|_O.
\end{equation}
To obtain the desired identity, we need to show that
$(P_1w_1)|_O=(P_2w_2)|_O$.
Since $P_j$ contains the nonlocal operator
$(-\Delta_{g_j})^\alpha$, this does not follow directly
from \eqref{eq_traces_w}.
The purpose of $v_j$ is to obtain matching fractional
Cauchy pairs by a local calculation.

We first show that the fractional Cauchy pairs of $v_1$
and $v_2$ coincide on $O$.
Since $g_1=g_2$ on $O$ and $w_1|_O=w_2|_O$, locality
gives $v_1|_O=v_2|_O$.
Furthermore, the definition of $w_j$ gives
$P_jw_j=(-\Delta_{g_j}+z)^{-1}f$, and hence
$(-\Delta_{g_j}+z)P_jw_j=f$.
Commuting functions of the same Laplacian, we obtain
\begin{align*}
(-\Delta_{g_j})^\alpha v_j=(-\Delta_{g_j}+z)(-\Delta_{g_j})^\alpha w_j&=(-\Delta_{g_j}+z)(P_jw_j-Q_jw_j)\\
&=f-(-\Delta_{g_j}+z)(Q_jw_j).
\end{align*}
Since $-\Delta_{g_j}+z$ is a differential operator and
$g_1=g_2$, $Q_1=Q_2$, and $w_1=w_2$ on $O$,
the right-hand sides agree there. Thus,
\begin{equation}
\label{eq:vpair}
v_1|_O=v_2|_O,\qquad
\bigl((-\Delta_{g_1})^\alpha v_1\bigr)|_O
=
\bigl((-\Delta_{g_2})^\alpha v_2\bigr)|_O.
\end{equation}

Applying Proposition~\ref{prop:paired} to \eqref{eq:vpair},
we obtain
\begin{equation}
\label{eq:vheat}
(e^{t\Delta_{g_1}}v_1)|_O
=
(e^{t\Delta_{g_2}}v_2)|_O,
\qquad t>0.
\end{equation}

We next pass from the heat evolutions of $v_j$ to those
of $w_j$. Fix $x\in O$ and write
$d(t,x)=(e^{t\Delta_{g_1}}w_1)(x)
-(e^{t\Delta_{g_2}}w_2)(x)$ for $t>0$.
The definition of $v_j$ in \eqref{eq:wv} and the heat
equation give
\[
e^{t\Delta_{g_j}}v_j
=
(-\Delta_{g_j}+z)e^{t\Delta_{g_j}}w_j
=
(-\partial_t+z)e^{t\Delta_{g_j}}w_j.
\]
Consequently, \eqref{eq:vheat} gives
$(-\partial_t+z)d(t,x)=0$ for $t>0$, and therefore
$\partial_t(e^{-zt}d(t,x))=0$.
By the strong continuity of the heat semigroup on
$H^s(M_j)$ and Sobolev embedding with $s>n/2$,
we have $(e^{t\Delta_{g_j}}w_j)(x)\to w_j(x)$
as $t\downarrow0$.
Thus $d(\,\cdot\,,x)$ extends continuously to $t=0$,
with $d(0,x)=w_1(x)-w_2(x)=0$.
It follows that $d(t,x)=0$ for every $t>0$.
This proves
\begin{equation}
\label{eq:wheat}
(e^{t\Delta_{g_1}}w_1)|_O
=
(e^{t\Delta_{g_2}}w_2)|_O,
\qquad t>0.
\end{equation}

Finally, we return to the fractional power representation.
Since $w_j$ is smooth, the integral in
\eqref{frac_laplace_heat} may be restricted to $O$,
as justified in Subsection~\ref{subsec_functional_calculus}.
Using \eqref{eq_traces_w} and \eqref{eq:wheat}, we obtain
\[
\bigl((-\Delta_{g_1})^\alpha w_1\bigr)|_O
=
\bigl((-\Delta_{g_2})^\alpha w_2\bigr)|_O.
\]
Since $Q_1w_1=Q_2w_2$ on $O$, adding these common
potential terms gives $(P_1w_1)|_O=(P_2w_2)|_O$.
Together with $P_jw_j=(-\Delta_{g_j}+z)^{-1}f$,
this yields the desired identity \eqref{eq:bare}.
\end{proof}

\subsection{Recovery of the heat kernels, the metric, and the potential}

We first recover the heat kernels on $O\times O$ for all
positive times, before completing the recovery of the metric
and the potential.

\begin{lemma}
\label{lem:heatwave}
Assume that $g_1|_O=g_2|_O$ and that \eqref{eq:bare}
holds for every $z>0$ and $f\in C^\infty_0(O)$.
Then the heat kernels satisfy
\begin{equation}
\label{eq:heatkernel}
e^{t\Delta_{g_1}}(x,y)=e^{t\Delta_{g_2}}(x,y),
\qquad t>0,\quad x,y\in O.
\end{equation}
\end{lemma}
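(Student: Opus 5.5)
The plan is to pass from the resolvent identity \eqref{eq:bare} to heat semigroups via uniqueness of the Laplace transform, and then to kernels by testing against functions in $C^\infty_0(O)$. Fix $f,q\in C^\infty_0(O)$ and set
\[
H_j(t)=\int_O q\,(e^{t\Delta_{g_j}}f)\,dV_g,\qquad t\geq0,
\]
where $g=g_1|_O=g_2|_O$, so that $dV_{g_1}=dV_{g_2}$ on $O$. By \eqref{eq:heatLinfty}, each $H_j$ is bounded, and by strong continuity of the heat semigroup on $L^2(M_j)$ it is continuous in $t$. Pairing \eqref{eq:laplaceresolvent} with $q$, which is allowed since the Bochner integral commutes with the bounded functional $u\mapsto\int_O qu\,dV_g$, gives for every $z>0$
\[
\int_0^\infty e^{-zt}H_j(t)\,dt=\int_O q\,\bigl((-\Delta_{g_j}+z)^{-1}f\bigr)\,dV_g .
\]
By \eqref{eq:bare}, the right-hand side is the same for $j=1,2$. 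Hence the Laplace transforms of the bounded continuous functions $H_1$ and $H_2$ agree for all $z>0$.

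By uniqueness of the Laplace transform, $H_1=H_2$ on $(0,\infty)$. One way to see this is that $H_1-H_2$, composed with $t=-\log s$, has vanishing moments on $[0,1]$, and Weierstrass approximation then forces it to vanish. The continuity of $H_j$ is what upgrades almost-everywhere equality to equality for every $t$.

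Next I insert the heat kernel representation. For each $t>0$,
\[
\int_O\int_O q(x)f(y)\bigl(e^{t\Delta_{g_1}}(x,y)-e^{t\Delta_{g_2}}(x,y)\bigr)\,dV_g(y)\,dV_g(x)=0
\]
for all $f,q\in C^\infty_0(O)$. The difference of kernels is smooth on $O\times O$. Linear combinations of products $q(x)f(y)$ are dense in $C^\infty_0(O\times O)$, or alternatively one can fix $x$ and test in $y$ twice. Either way, the difference vanishes pointwise, which gives \eqref{eq:heatkernel}.

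The only delicate step is the Laplace transform uniqueness, and specifically making sure that $H_j$ is continuous and bounded so that equality holds for every $t>0$ rather than only almost everywhere. This is routine given the contraction and strong continuity properties recorded in Subsection~\ref{subsec_functional_calculus}.
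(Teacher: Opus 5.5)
Your proof is correct and follows essentially the same route as the paper: uniqueness of the Laplace transform for bounded continuous functions applied to the heat evolution of $f\in C^\infty_0(O)$, then the heat kernel representation and testing against functions in $C^\infty_0(O)$. The only difference is that you pair with $q\in C^\infty_0(O)$ in the $x$ variable rather than evaluating at a fixed $x\in O$, which lets you use only the $L^2$-valued Bochner integral. The paper instead uses the pointwise form of \eqref{eq:laplaceresolvent}, justified by Sobolev convergence, and then needs only a single test in $y$.
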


\begin{proof}
Fix $f\in C^\infty_0(O)$ and $x\in O$.
The function
$t\mapsto (e^{t\Delta_{g_1}}f)(x)
-(e^{t\Delta_{g_2}}f)(x)$
is continuous on $[0,\infty)$ by the strong continuity
of the heat semigroup on $H^s(M_j)$ and Sobolev embedding
with $s>n/2$.
By \eqref{eq:heatLinfty}, it is also bounded. 
Using the pointwise resolvent representation
\eqref{eq:laplaceresolvent} and \eqref{eq:bare},
we obtain, for every $z>0$,
\[
\begin{aligned}
&\int_0^\infty e^{-zt}
\bigl((e^{t\Delta_{g_1}}f)(x)
-(e^{t\Delta_{g_2}}f)(x)\bigr)\,dt\\
&\qquad=
\bigl((-\Delta_{g_1}+z)^{-1}f\bigr)(x)
-
\bigl((-\Delta_{g_2}+z)^{-1}f\bigr)(x)
=0.
\end{aligned}
\]
Uniqueness of the Laplace transform for bounded continuous
functions therefore gives
$(e^{t\Delta_{g_1}}f)(x)=(e^{t\Delta_{g_2}}f)(x)$
for every $t\geq0$.

Now fix $t>0$ and $x\in O$, and write
$g=g_1|_O=g_2|_O$.
The heat kernel representation and the support of $f$ give
$\int_O
\bigl(e^{t\Delta_{g_1}}(x,y)-e^{t\Delta_{g_2}}(x,y)\bigr)
f(y)\,dV_g(y)=0$
for every $f\in C^\infty_0(O)$.
Since the difference of the kernels is smooth as a function
of $y\in O$, it vanishes pointwise there.
This proves \eqref{eq:heatkernel}.
\end{proof}

Applying \cite[Theorem~3.5 and Remark~3.6]{FGKRSU}
to \eqref{eq:heatkernel}, we obtain a smooth diffeomorphism
$\Phi:M_1\to M_2$ such that $\Phi|_O=\Id$ and
$g_1=\Phi^*g_2$ on $M_1$.
For the underlying recovery argument, see the proof of
\cite[Theorem~1.5]{FGKU}, which is based on
\cite[Theorem~2]{HLOS_2018}.
For the conclusion $\Phi|_O=\Id$ when $O$ is an arbitrary
nonempty open observation set, see also
\cite[Theorem~1.1]{SS_2025}.

The recovery of the potentials follows the one-solution
argument of \cite[Section~6]{FKU}, which we include
for completeness.
Let $\tilde Q_2=Q_2\circ\Phi$.
By \eqref{eq:dataequal_new} and the invariance of the
fractional Cauchy data under isometries, as shown in the
proof of \cite[Lemma~A.1]{FKU}, we have
\begin{equation}
\label{eq_4_1}
\mathcal{C}^O_{M_1,g_1,Q_1}
=
\mathcal{C}^O_{M_2,g_2,Q_2}
=
\mathcal{C}^O_{M_1,\Phi^*g_2,\tilde Q_2}
=
\mathcal{C}^O_{M_1,g_1,\tilde Q_2}.
\end{equation}

Choose a nonzero function $f\in C^\infty_0(O)$ and let
$u_1=((-\Delta_{g_1})^\alpha+Q_1)^{-1}f
\in C^\infty(M_1)$.
Since $f$ is supported in $O$, we have
$((-\Delta_{g_1})^\alpha+Q_1)u_1=0$
on $M_1\setminus\overline O$.
Thus \eqref{eq_4_1} implies that there exists
$u_2\in C^\infty(M_1)$ satisfying
$((-\Delta_{g_1})^\alpha+\tilde Q_2)u_2=0$
on $M_1\setminus\overline O$, with
$(u_1-u_2)|_O=0$ and
$((-\Delta_{g_1})^\alpha(u_1-u_2))|_O=0$.
Proposition~\ref{prop:fracUC} gives $u_1=u_2$ on $M_1$,
and subtracting their equations yields
$(Q_1-\tilde Q_2)u_1=0$ on $M_1\setminus\overline O$.

By Proposition~\ref{prop:fracUC} and the fact that $f\ne0$,
the set
$\{x\in M_1\setminus\overline O:u_1(x)\ne0\}$
is dense in $M_1\setminus\overline O$.
Thus, using $(Q_1-\tilde Q_2)u_1=0$ there,
the equality $Q_1=\tilde Q_2$ on $O$, and continuity,
we obtain $Q_1=\tilde Q_2$ on $M_1$.
Since $Q_j=V_j+W$ and $W\circ\Phi=W$, we conclude that
$V_1=V_2\circ\Phi$ on $M_1$.

Conversely, \eqref{eq:conclusion} implies
\eqref{eq:dataequal} by the invariance of the fractional
Cauchy data under isometries.
This completes the proof of Theorem~\ref{thm_main}.

\subsection*{Acknowledgements}
The research of K.K. is partially supported by the National Science Foundation (DMS 2408793).  The research of G.U. is partially supported by the National Science Foundation. 
The authors acknowledge the use of AI tools. Any suggestions incorporated into the manuscript were checked and revised by the authors, who take full responsibility for the content of the manuscript.

\end{document}